\documentclass[11pt,reqno]{amsart}

\usepackage{xcolor}
\usepackage{graphicx}
\usepackage[colorlinks=true,allcolors=blue,backref=page]{hyperref}
\usepackage{amsmath,amssymb,amsthm,mathtools}
\usepackage{mathrsfs}
\usepackage[noabbrev,capitalize,nameinlink]{cleveref}
\usepackage{aliascnt}
\usepackage{fullpage}
\usepackage[noadjust]{cite}
\usepackage{bbm}
\usepackage[T1]{fontenc}
\allowdisplaybreaks[1]
\numberwithin{equation}{section}

\crefname{equation}{}{}
\crefname{conjecture}{Conjecture}{Conjectures}
 
\newtheorem{theorem}{Theorem}[section]

\newaliascnt{proposition}{theorem}
\newtheorem{proposition}[proposition]{Proposition}
\aliascntresetthe{proposition}
\crefname{proposition}{Proposition}{Propositions}

\newaliascnt{lemma}{theorem}
\newtheorem{lemma}[lemma]{Lemma}
\aliascntresetthe{lemma}
\crefname{lemma}{Lemma}{Lemmas}

\newaliascnt{corollary}{theorem}

\aliascntresetthe{corollary}
\crefname{corollary}{Corollary}{Corollaries}

\crefname{claim}{Claim}{Claims}
\newtheorem{conjecture}[theorem]{Conjecture}

\theoremstyle{definition}

\theoremstyle{remark}
\newtheorem*{remark}{Remark}

\newcommand{\mc}{\mathcal}

\newcommand{\one}{\mathbbm{1}}

\newcommand{\R}{\mathbb{R}}
\newcommand{\E}{\mathbb{E}}
\newcommand{\PP}{\mathbb{P}}

\renewcommand{\le}{\leqslant}
\renewcommand{\ge}{\geqslant}

\newcommand{\inner}[2]{\langle #1,#2 \rangle}
\DeclarePairedDelimiter{\snorm}{\lVert}{\rVert}
\DeclarePairedDelimiter{\abs}{\lvert}{\rvert}
\DeclarePairedDelimiter{\set}{\{}{\}}
\DeclarePairedDelimiter{\paren}{(}{)}

\DeclareMathOperator{\conv}{conv}
\DeclareMathOperator{\supp}{supp}
\DeclareMathOperator{\vrad}{vrad}

\newcommand{\Sph}{S^{n-1}}

\newcommand{\gauss}{\gamma_n}
\newcommand{\rvLp}[2]{\paren{\E\abs{#1}^{#2}}^{1/#2}}
\newcommand{\margLp}[3]{\paren{\E_{X\sim #1}\abs{\inner{X}{#2}}^{#3}}^{1/#3}}

\title{Dimension-free Gaussian tail estimates for linear functionals on convex bodies}

\author[Letwin]{Brayden Letwin}
\author[Mikulincer]{Dan Mikulincer}
\address{Department of Mathematics, University of Washington, Seattle, WA 98195}
\email{\{letwin,danmiku\}@uw.edu}

\begin{document}

\begin{abstract}
Let $K \subset \mathbb{R}^n$ be a centered convex body of volume one. We prove that there exist absolute constants $c,C > 0$ and an orthonormal set of vectors $\Theta \subset S^{n-1}$ with size $\left|\Theta\right| \ge 9n/10$ such that, if $X$ is a random vector uniformly distributed on $K$, then for all $\theta \in \Theta$ one has
\[
    c\cdot \sqrt{p}\,\left(\mathbb{E} \left|\left\langle X,\theta \right\rangle\right|^2\right)^{1/2}
    \le
    \left(\mathbb{E} \left|\left\langle X,\theta \right\rangle\right|^p\right)^{1/p}
    \le
    C\cdot \sqrt{p}\,\left(\mathbb{E} \left|\left\langle X,\theta \right\rangle\right|^2\right)^{1/2},
\]
where the upper estimate holds for all $p \ge 1$ while the lower bound only holds for $1 \le p \le n$.
\end{abstract}

\maketitle

\section{Introduction}\label{sec:1}
Let $K \subset \R^n$ be a centered convex body of volume one, and let $X$ be a random vector uniformly distributed on $K$. The behavior of one-dimensional marginals $\inner{X}{\theta}$, for $\theta \in \Sph$, has been an important area of research over the past few decades. By the Brunn--Minkowski inequality, one may show that $\inner{\cdot}{\theta}$ has subexponential tails: for all $t \ge 1$ one has
\[
    \PP\paren{\abs{\inner{X}{\theta}} \ge t\cdot \E\abs{\inner{X}{\theta}}} \le \exp(-t/C),
\]
for some absolute constant $C > 0$. Going beyond this subexponential estimate, stronger concentration estimates for linear functionals, and more generally for Lipschitz functions, were obtained in the case where $K$ is uniformly convex and symmetric \cite{GM87, GM83}. In a more general setting, V. Milman posed the question of whether \emph{every} centered convex body $K$ of volume one admits at least one "subgaussian direction". That is, does there exist $\theta \in S^{n-1}$ such that the linear functional $\inner{\cdot}{\theta}$ has the following subgaussian behavior?
\begin{equation*}
    \PP\paren{\abs{\inner{X}{\theta}} \ge t \cdot \E\abs{\inner{X}{\theta}}} \le \exp(-t^2/C^2),
\end{equation*}
for all $t \ge 1$, where $C>0$ is an absolute constant. There has been substantial progress towards this question, with connections and contributions to several neighbouring problems. However, the existence of such a dimension-free subgaussian constant remained an open question. In this work, we resolve V. Milman's question by establishing the existence of a subgaussian direction. In fact, we prove that there exists an orthonormal set of vectors $\Theta \subset \Sph$ with size $\abs{\Theta} \ge 9n/10$ such that $\inner{\cdot}{\theta}$ simultaneously has subgaussian and supergaussian tails. 
\begin{theorem}\label{thm:1.1}
Let $K \subset \R^n$ be a centered convex body of volume one. There exists an orthonormal set of vectors $\Theta \subset S^{n-1}$ with size $\abs{\Theta} \ge 9n/10$, and absolute constants $C,c>0$ such that, if $X$ is a random vector uniformly distributed on $K$, then for all $\theta \in \Theta$ one has
\begin{equation} \label{eq:1.1}
    c\cdot \sqrt{p}\,\paren{\E \abs{\inner{X}{\theta}}^2}^{1/2}
    \le
    \paren{\E \abs{\inner{X}{\theta}}^p}^{1/p}
    \le
    C\cdot \sqrt{p}\,\paren{\E \abs{\inner{X}{\theta}}^2}^{1/2}
\end{equation}
where the upper bound holds for all $p \ge 1$ while the lower bound only holds for $1 \le p \le n$. In particular, this implies for all $\theta \in \Theta$, and other absolute constants $c',C'>0$,
\begin{equation} \label{eq:1.2}
    \exp(-C'\cdot t^2) \le \PP\paren*{\abs{\inner{X}{\theta}} \ge t\cdot \E\abs{\inner{X}{\theta}}} \le 2\exp(-t^2/C'^2),
\end{equation}
where the upper bound holds for all $t \ge 1$ while the lower bound only holds for $1 \le t \le c'\cdot \sqrt{n}$.
\end{theorem}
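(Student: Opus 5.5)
The plan is to reduce everything to volume estimates for the $L_p$-centroid bodies $Z_p(K)$, defined by $h_{Z_p(K)}(\theta)=\margLp{K}{\theta}{p}$, together with the boundedness of the isotropic constant. I will use the latter as an external input: it is not stated earlier in this paper, and I take it from the recent resolution of the slicing problem (Klartag--Lehec, Bizeul), namely $L_K\le C$. Since \eqref{eq:1.1} is invariant under the choice of orthonormal frame but not under general linear maps, I first replace $K$ by $TK$ with $T\in SL_n$ putting $K$ in isotropic position. Then $\E\inner{X}{\theta}^2=L_K^2$ for all $\theta$, and $Z_2(K)=L_K B_2^n$. The main unproved issue with this reduction is that the orthonormal set $\Theta$ for $TK$ must be transported back to the original $K$. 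I intend to handle it by producing a \emph{subspace} $E$ of dimension $\ge 9n/10$ on which the estimates hold for every unit vector, and then taking an orthonormal basis of $T^{*}E$; this point needs to be checked.

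For the upper bound I would introduce the $\psi_2$-type body $\Psi=\conv\bigl(\bigcup_{p\ge 2} p^{-1/2}Z_p(K)\bigr)$. It satisfies $h_\Psi(\theta)\ge h_{Z_2}(\theta)=L_K$, so $L_K B_2^n\subset \Psi$, and the upper estimate in \eqref{eq:1.1} for $\theta$ is equivalent to $h_\Psi(\theta)\le C L_K$. The first key step is to show $\vrad(\Psi)\le C L_K$. For a single $p\le n$ this is Paouris' bound $\vrad(Z_p(K))\le C\sqrt p\,L_K$. For $p\ge n$ one has $Z_p\simeq Z_n\simeq\conv(K,-K)$, so those $p$ contribute nothing new. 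The difficulty is controlling the convex hull over the whole dyadic range $2\le p\le n$ without losing the $\sqrt{\log n}$ present in Klartag's earlier $\psi_2$-body estimate. This is the step I expect to be the main obstacle, and the improvement must come from $L_K\le C$ (and possibly the sharp thin-shell/KLS-type bounds). What I would try first is a Klartag-style covering argument: bound the covering numbers $N(p^{-1/2}Z_p, tL_KB_2^n)$ dyadically in $p$ with geometrically decaying exponents, so that the union costs only a constant in volume radius.

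Granting $\vrad(\Psi)\le CL_K$, a greedy argument produces the subspace. Suppose $F$ is a subspace of dimension $m=\lceil n/10\rceil$ with $h_\Psi\ge AL_K$ on its unit sphere. Then $P_F\Psi\supset AL_KB_F$. By Rogers--Shephard, $\mathrm{vol}(\Psi)\ge\binom nm^{-1}\mathrm{vol}_m(P_F\Psi)\,\mathrm{vol}_{n-m}(\Psi\cap F^\perp)$, and $\Psi\cap F^\perp\supset L_KB_{F^\perp}$. Comparing with $\vrad(\Psi)\le CL_K$ gives $A^m\le C^n$, i.e. $A\le C^{10}$. Hence no $n/10$-dimensional subspace is bad, and a Courant--Fischer type selection (choosing a maximal subspace on which $h_\Psi\le C'L_K$, or peeling directions one at a time) yields $E$ with $\dim E\ge 9n/10$ on which the upper bound holds.

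For the lower bound, log-concavity of $p\mapsto$ moments (Borell's lemma / Lyapunov-type interpolation for log-concave marginals) reduces matters to the endpoint $p=n$. It suffices to show $h_{Z_n}(\theta)\ge c\sqrt n\,L_K$ on the same subspace, with the constant $9/10$ split between the two requirements. Here I would use the reverse estimate $\vrad(Z_n(K))\simeq\vrad(K)\ge c\sqrt n\,L_K$ (volume one and $L_K\le C$) and a dual greedy argument. If $h_{Z_n}<c\sqrt nL_K$ on a subspace $G$ of dimension $n/20$, then $P_GZ_n$ is small. Meanwhile $P_{G^\perp}Z_n\subset h_{\max}B$ with $h_{\max}\le C\sqrt n L_K$ from the $\psi_1$ bound. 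The inequality $\mathrm{vol}(Z_n)\le\mathrm{vol}(P_GZ_n)\,\mathrm{vol}(P_{G^\perp}Z_n)$ then contradicts the volume lower bound for small $c$. Intersecting the two good subspaces gives dimension $\ge 9n/10$ after adjusting constants. Finally, \eqref{eq:1.2} follows from \eqref{eq:1.1} in the standard way: Markov's inequality gives the upper tail, and Paley--Zygmund applied to $\abs{\inner X\theta}^p$ with $p\simeq t^2\le n$ gives the lower tail.
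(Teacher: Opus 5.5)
The subspace device you flagged as needing a check is where the plan fails. Your ``Courant--Fischer type selection'' passes from ``no $\lceil n/10\rceil$-dimensional subspace has $h_\Psi\ge AL_K$ on its whole unit sphere'' to ``some $E$ with $\dim E\ge 9n/10$ has $h_\Psi\le C'L_K$ on its whole unit sphere.'' That step works for ellipsoids, but for general bodies it is false, and in fact the subspace you want usually does not exist.

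Take $K$ to be the cross-polytope of volume one, which is isotropic with $L_K\simeq 1$. Its marginals behave like $\sum_i\theta_iE_i$ with i.i.d.\ symmetric exponentials. Hence $\margLp{K}{\theta}{p}\simeq\sqrt p+p\snorm{\theta}_\infty$ for $\theta\in\Sph$ and $p\le n$, so $h_\Psi(\theta)\simeq\sqrt n\snorm{\theta}_\infty$, i.e.\ $\Psi\simeq\sqrt n B_1^n$. This body has $\vrad(\Psi)\simeq 1$, so your volume bound holds. Yet any $k$-dimensional subspace $E$ has $\sum_i\abs{P_Ee_i}^2=k$, so it contains a unit vector with $\snorm{\theta}_\infty\ge\sqrt{k/n}$, whose subgaussian constant is $\gtrsim\sqrt k$. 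Large orthonormal \emph{sets} of good directions exist (e.g.\ flat Hadamard-type vectors), but large good \emph{subspaces} do not. Both your transport back to the original $K$ and your ``intersect the two good subspaces'' step therefore collapse.

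What your Rogers--Shephard computation does prove is weaker: every subspace of dimension $\ge n/10$ contains one upper-good direction. Such a statement has to be used through a greedy choice in successive complements that are orthogonal after applying $T$, namely $F_j=\set{y:\inner{Ty}{Ty_i}=0,\ i\le j}$, as in \cref{lem:3.4}.

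The lower-bound half has further gaps. First, one-dimensional log-concavity does not reduce it to $p=n$. Borell's lemma only gives $\rvLp{X_\theta}{p}\ge c\,(p/n)\rvLp{X_\theta}{n}$. For example, the log-concave density proportional to $e^{-\sqrt n(\abs{x}-1)_+}$ has variance $\simeq 1$ and $\rvLp{X}{n}\gtrsim\sqrt n$, yet $\rvLp{X}{p}\le 1$ for all $1\le p\le\sqrt n/2$. You need lower bounds at every dyadic scale $p\le c_0 n$; monotonicity of $L^p$ norms then fills the gaps (\cref{lem:3.5}). Also, the $\psi_1$ bound gives only $h_{Z_n}\le CnL_K$, not $C\sqrt nL_K$, so your product-of-projections volume count does not close as written.

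Second, and more importantly, the upper bound and the lower bounds at all scales must hold for the \emph{same} vector. Separate ``no uniformly bad subspace'' statements cannot provide that. The paper gets it from a volume comparison inside each $F$ with $\dim F\ge n/10$. Set $A=\bigcap_{p\in\mathcal{D}}C_0\sqrt{np}\,L_KZ_p(K)^\circ$ and $B_p=\varepsilon\sqrt{np}\,L_KZ_p(K)^\circ$. Then $\vrad_F(A\cap F)\gtrsim\sqrt n$, while $\vrad_F(B_p\cap F)\lesssim\varepsilon\sqrt n$ by Blaschke--Santal\'o and the volume lower bound for centroid bodies of the projected measure; this is where $L_\mu\le C$ enters. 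So a single point of $A\cap F$ escapes every $B_p$.

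Finally, the estimate you call the main obstacle, $\vrad(\Psi)\le CL_K$, is where the real content lies. The covering-number route you propose is the one that produced the $\sqrt{\log n}$ loss in earlier work. The missing idea is to work with Gaussian measure:
\begin{itemize}
\item Apply Paley--Zygmund to $\snorm{Y}_{Z_p(K)^\circ}^{-p}$ with $Y\sim\gauss$, using two-sided bounds on $W_{-p}$ and $W_{-2p}$ of $Z_p(K)$. This gives $\gauss\paren{C_0\sqrt{np}\,L_KZ_p(K)^\circ}\ge e^{-Cp}$.
\item The Gaussian correlation inequality multiplies these over dyadic $p$ into $\gauss(A)\ge e^{-C\sum_{p\in\mathcal{D}}p}\ge e^{-Cc_0n}$.
\end{itemize}
Via Blaschke--Santal\'o, this also recovers your bound $\vrad(\Psi)\le CL_K$.
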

We will only prove \cref{eq:1.1}. The tail estimate \cref{eq:1.2} is an immediate corollary of Markov's inequality and the Paley--Zygmund inequality; for the standard argument which translates from absolute moments to tails we invite the reader to consult \cite[Section~2.6]{Vershynin18}. The lower estimate in \cref{eq:1.2} cannot, in general, be improved to hold for all $t \ge 1$, and this may be seen by considering the Euclidean ball in $\R^n$, normalized to have volume one. 

The problem of finding a subgaussian direction in a convex body is situated within the study of high-dimensional convex bodies and asymptotic convex geometry, see the monograph \cite{BGVV14}, as well as the books \cite{AGM15,AGM21}, for more information on these topics. The specific question of subgaussian directions has a substantial history. The problem at hand was originally proved in the special cases of $l_p$ balls \cite{BGMN05}, unconditional bodies \cite{BN03a, BN03b}, and zonoids \cite{Paouris03}. Subsequent works have focused on improving the dimension-dependent bounds on the constant $C$ from \cref{thm:1.1}. Klartag, in \cite{Klartag08}, gave the first poly-logarithmic bound (see also \cite{GPP07} for another proof), which was later improved to $C\log^{1/2} n$ in \cite{GPV11}, by Giannopoulos, Paouris, and Valettas. In parallel, many works, such as \cite{DP10,Bourgain03,Paouris05}, have explored the connections between subgaussian directions and other questions in convex geometry, such as Bourgain's slicing conjecture, which was recently resolved in \cite{KL25,Bizeul25}.

For the lower estimates in \cref{thm:1.1}, we also mention the work of Paouris \cite{Paouris12a} (see \cite{klartag2017super} as well), who proved that every convex body $K$ admits a direction with supergaussian lower tails. Our conclusion is closely related in spirit: instead of proving a separate inequality, we prove, for $1 \le p \le n$, 
\[c\cdot \sqrt{p}\,(\E \abs{\inner{X}{\theta}}^2)^{1/2} \le (\E \abs{\inner{X}{\theta}}^p)^{1/p},
\]
for the same orthonormal set $\Theta$ that also satisfies the dimension-free subgaussian upper estimates. 

Several neighboring literatures should also be kept in view. One concerns \emph{typical} marginals of convex bodies, and includes important results such as the central limit theorem for convex bodies \cite{Klartag07a,Klartag07b, EK08}. Another connected line of research concerns the concentration of norms or other functionals, with notable contributions in \cite{Paouris06, Fleury10a, Fleury10b, Pivovarov10, LMS98, FGP07}. Finally, there are also works studying anti-concentration in the form of small-ball probabilities, see for example \cite{Paouris12b, DP10}. Our theorem is adjacent to all three strands, but different in the sense that we aim to understand the tails of specific one-dimensional marginals from a large orthonormal family, which can exhibit atypical behavior. 

With respect to the lines of research mentioned above, a relevant question asks about the typical subgaussian constant of a \emph{random} one-dimensional marginal of a convex body.  Again, there is a long line of works on this problem \cite{Paouris05, GPV12a, GPV12b} with the current best result from \cite{BH15}, building upon \cite{Milman15}, showing that a typical marginal of an isotropic convex body is subgaussian with $C \cdot \log^{2}n$.  They also note that in general this dependence cannot be improved below $\log^{1/2} n.$

We remark that the constant $9/10$ in \cref{thm:1.1} is inconsequential and can be replaced by any fixed $0 \le \beta < 1$. That is, for such a $\beta$, by repeating our argument mutatis mutandis and perhaps increasing the value of the constant $C$, we can ensure $|\Theta| \geq \beta n$.

Finally, we raise the question of whether \cref{thm:1.1} can be strengthened in the sense that one may find an \emph{orthonormal basis} $\{\theta_1, \ldots \theta_n\} \subset S^{n-1}$ satisfying \cref{eq:1.1}. We return to this question in \cref{sec:4}, where we explain why the greedy construction used below does not by itself produce a full basis.

\subsection{Overview of the proof}\label{subsec:1.1}
Our proof of \cref{thm:1.1} follows the approach outlined in \cite{GPP07} and \cite{GPV11} and goes through considering the $\Psi_2$ bodies. For a convex body $K$, $\Psi_2(K)$ is another convex body which encodes information about the subgaussian constants of the marginals of $K$, see \cref{subsec:2.2} for a precise definition. The key point is that the existence of a subgaussian direction is equivalent to finding a point with large norm in the polar body $\Psi_2(K)^\circ$.

Similar to the strategy of \cite{GPV11}, when $K$ is isotropic, we discretize the defining constraints of $\Psi_2(K)^\circ$ by considering the finite set
\[
    \mc D = \set{2^j : 0 \le j,\ 2^j \le c_0\cdot n},
\]
for some absolute constant $c_0 > 0$, and by associating a convex body for every $p \in \mc D$
\begin{equation} \label{eq:1.3}
    A_p
    =
    \{
        y \in \R^n : \margLp{K}{y}{p} \le C\cdot \sqrt{np}\,L_K
    \},
\end{equation}
where $L_K$ is the isotropic constant of $K$.
As we will explain, the support-function constraints defining $A = \cap_{p \in \mc D} A_p$ are a discretized and scaled version of the constraints defining $\Psi_2(K)^\circ$. In light of the above, it is now natural to bound the volume (or the Gaussian measure) of $A$ from below, which will then translate to a bound on $\|y\|$ for some $y \in A$. In \cite{GPV11}, the authors obtain such a bound by analyzing the covering numbers of $\Psi_2(K)$.

For the sharp bound in \cref{thm:1.1}, we replace the covering numbers argument with Gaussian measure arguments. First, for $\gamma_n$ the standard Gaussian measure on $\R^n$, we show that for every $p \in \mc D$,
$$\gamma_n(A_p) \ge e^{-C\cdot p}.$$
This is based on negative moment estimates from \cite{Paouris12b}, which were also used in \cite{GPV12a, GPV11}. Similar to these works, we transfer the negative moment estimates to Gaussian space and apply the Paley--Zygmund inequality. Then we apply the Gaussian correlation inequality, from which we deduce
\begin{equation} \label{eq:1.4}
    \gauss(A)
    \ge
    \prod_{p \in \mc D} \gauss(A_p)
    \ge
    e^{-C'\cdot n}.
\end{equation}

Finding $y \in A$ with $\|y\|$ large is now a matter of excluding smaller sublevel sets. For $p \in \mc D$, put
\[
    B_p
    =
    \set{
        y \in \R^n : \margLp{K}{y}{p} \le \varepsilon\cdot \sqrt{np}\,L_K
    }.
\]

We prove that, for every subspace $F \subset \R^n$ with $\dim F \ge n/10$,
\[
    (A \cap F) \setminus \bigcup_{p \in \mc D} (B_p \cap F) \neq \varnothing.
\]

The point is that the Gaussian measure lower bound on \(A\) forces \(A\cap F\) to have large volume-radius, whereas each \(B_p\cap F\) has volume-radius of order \(\varepsilon \sqrt n\). Taking \(\varepsilon\) sufficiently small therefore prevents the union of the bad pieces from covering \(A\cap F\).

Choosing points in these annuli inside successive orthogonal complements gives an orthonormal set $\Theta \subset \Sph$ with $\abs{\Theta}\ge 9n/10$ such that, for $p \in \mc D$ and $\theta \in \Theta$,
\[
    c\cdot \sqrt{p}\,L_K
    \le
    \margLp{K}{\theta}{p}
    \le
    C\cdot \sqrt{p}\,L_K.
\]

Extending beyond $p \in \mc D$ to all $1 \le p \le n$ is routine and follows from one-dimensional log-concave absolute moment comparison inequalities.
For the upper estimate in \cref{eq:1.1}, we also extend to $p>n$ using the concavity properties of the one-dimensional marginal density, as follows from the Brunn--Minkowski inequality. 

\subsection{Conventions} \label{subsec:1.2}
Throughout the paper, \(c,C,c_0,C_0,c_1,C_1,\ldots>0\) denote absolute constants whose values may change from line to line. When a constant depends on an auxiliary parameter, we indicate that dependence with a subscript, as in \(c_\beta\) or \(C_{c_0}\). We will use this convention without further comment.

\section{Preliminaries}\label{sec:2}

\subsection{Basic notions of convex bodies}\label{subsec:2.1}
If \(K\subset\R^n\) is a convex body, we say that \(K\) is centered if the uniform probability measure on \(K\) has mean zero, that is
\[
    \frac{1}{\abs{K}}\int_K x\,dx=0.
\]
We use the same terminology for random vectors and probability measures: they are centered when their expectation is zero. If \(K\subset \R^n\) is a convex body with \(0\in\operatorname{int}(K)\), we write
\[
    \snorm{x}_K
    =
    \inf\set{t \ge 0:x\in tK}
\]
for its Minkowski functional and
\[
    K^\circ
    =
    \set{y\in \R^n:\inner{x}{y}\le 1\text{ for all }x\in K}
\]
for its polar. Note that for convex bodies, one has $\left(K^{\circ}\right)^{\circ} = K$. For a random variable \(X\), we write its $L^p$ norm as \(\rvLp{X}{p}\). When \(K\subset \R^n\) has volume one, expectations of the form \(\margLp{K}{\theta}{p}\) are taken with respect to the uniform probability measure on \(K\). For \(p\ge1\), the \(L^p\) centroid body \(Z_p(K)\) is the symmetric convex body determined through its polar
\begin{equation} \label{eq:2.1}
        Z_p(K)^\circ
    =
    \set{
        \theta\in\R^n:
        \margLp{K}{\theta}{p}\le1
    }.
\end{equation}

A centered convex body \(K\subset\R^n\) of volume one is called isotropic if there is a number \(L_K>0\) such that
\[
    \int_K \inner{x}{\theta}^2\,dx
    =
    L_K^2\abs{\theta}^2
    \qquad\text{for every }\theta\in\R^n.
\]
Equivalently, the covariance matrix of the uniform measure on \(K\) is \(L_K^2\operatorname{Id}\). The number \(L_K\) is called the isotropic constant of \(K\).

\subsection{\texorpdfstring{$\psi_2$}{psi2} norms, absolute moments, and tails}\label{subsec:2.2}
If $X$ is a real random variable, write
\[
    \snorm{X}_{\psi_2}
    =
    \inf\set{
        t > 0 : \E \exp\paren*{\frac{X^2}{t^2}} \le 2
    }.
\]
for the $\psi_2$ norm of $X$.
For a bounded random variable $X$, the quantities
\[
    \snorm{X}_{\psi_2},
    \qquad
    \inf\set{A>0:\PP\paren{\abs X\ge t}\le 2e^{-t^2/A^2}\ \text{for all }t\ge0},
    \qquad
    \sup_{p\ge1}\frac{\rvLp{X}{p}}{\sqrt p}
\]
are equivalent up to absolute constants; in particular, 
\begin{equation} \label{eq:2.2}
    c\cdot \sup_{p \ge 1} \frac{\rvLp{X}{p}}{\sqrt{p}}
    \le
    \snorm{X}_{\psi_2}
    \le
    C\cdot \sup_{p \ge 1} \frac{\rvLp{X}{p}}{\sqrt{p}}.
\end{equation}
Thus a uniform bound on $\rvLp{X}{p}/\sqrt p$ is equivalent, up to
absolute constants, to a $\psi_2$ bound, and hence to subgaussian tail decay.
See, for example, \cite[Section~2.6]{Vershynin18}.

For a centered convex body \(K\subset\R^n\) of volume one, we package these absolute moment constraints into the body
\[
    \Psi_2(K)
    =
    \conv\paren*{
        \bigcup_{1\le p\le n}\frac{1}{\sqrt p}Z_p(K)
    }.
\]
Equivalently,
\[
    \Psi_2(K)^\circ
    =
    \bigcap_{1\le p\le n}\sqrt p\,Z_p(K)^\circ
    =
    \set{
        y\in\R^n:
        \margLp{K}{y}{p}
        \le
        \sqrt p
        \text{ for all }1\le p\le n
    }.
\]
Thus, if \(y\in\Psi_2(K)^\circ\) has large Euclidean norm and \(\theta=y/\abs y\), then the $L^p$ norms of the marginal \(\inner{\cdot}{\theta}\) are small for every $1\le p \le n$. In the proof below, we use a dilated version of this polar body and discretize $1 \le p \le n$ by instead considering $\mc{D}$ as described above through \cref{eq:1.3}.

\subsection{Linear invariance of centroid bodies}\label{subsec:2.3}
We will use the following elementary linear invariance principle. If $K \subset \R^n$ is a centered convex body of volume one and $T \colon \R^n \to \R^n$ is linear with $\det T = 1$, then for every $p \ge 1$ and every $\theta \in \R^n$,
\[
    \margLp{TK}{\theta}{p}
    =
    \margLp{K}{T^\top \theta}{p}.
\]
Consequently,
\[
    Z_p(TK) = T Z_p(K).
\]
The first identity is a change of variables. The second follows because
\[
    y\in Z_p(TK)^\circ
    \quad\Longleftrightarrow\quad
    T^\top y\in Z_p(K)^\circ
    \quad\Longleftrightarrow\quad
    y\in (T Z_p(K))^\circ.
\]
Thus the centroid-body constructions below are compatible with volume-preserving linear changes of variables. Throughout the remainder of the paper, we will require the existence of a linear map $T : \R^n \to \R^n$ which places $TK$ in isotropic position. \cref{lem:2.1} accomplishes this for us.

\begin{lemma}\label{lem:2.1}
Let $K \subset \R^n$ be a centered convex body of volume one, and let $\Sigma_K$ be the covariance matrix of the uniform measure on $K$. Set
\[
    T = (\det \Sigma_K)^{1/(2n)} \Sigma_K^{-1/2}.
\]
Then $\det T = 1$ and $TK$ is isotropic. Moreover, if $u \in \R^n$ and $a = Tu$ then for every $p \ge 1$,
\[
    \margLp{K}{a}{p}
    =
    \margLp{TK}{u}{p}.
\]
In particular,
\[
    \frac{\snorm{\inner{\cdot}{a}}_{\psi_2(K)}}{\margLp{K}{a}{2}}
    =
    \frac{\snorm{\inner{\cdot}{u}}_{\psi_2(TK)}}{\margLp{TK}{u}{2}}.
\]
\end{lemma}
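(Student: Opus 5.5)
The plan is to verify the three claims of \cref{lem:2.1} directly, using only the change of variables behind the linear invariance principle of \cref{subsec:2.3}. Set $d = \det\Sigma_K$ and note that $\Sigma_K$ is symmetric positive definite, since $K$ has nonempty interior. Then $\Sigma_K^{-1/2}$ is well defined, symmetric, and positive definite, and
\[
    \det T = d^{n/(2n)}\,\det \Sigma_K^{-1/2} = d^{1/2}\,d^{-1/2} = 1 .
\]
In particular $TK$ has volume one.

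\emph{Step 1: $TK$ is centered and isotropic.} Let $X$ be uniform on $K$. Because $T$ is linear and $\det T = 1$, the vector $TX$ is uniform on $TK$, and $\E\, TX = T\,\E X = 0$, so $TK$ is centered. Its covariance is
\[
    T\Sigma_K T^\top = d^{1/n}\,\Sigma_K^{-1/2}\Sigma_K\Sigma_K^{-1/2} = d^{1/n}\operatorname{Id},
\]
which is a multiple of the identity. Hence $TK$ is isotropic with $L_{TK} = d^{1/(2n)}$.

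\emph{Step 2: the moment identity.} By the identity in \cref{subsec:2.3} applied to $T$,
\[
    \margLp{TK}{u}{p} = \margLp{K}{T^\top u}{p}.
\]
Since $\Sigma_K^{-1/2}$ is symmetric, $T^\top = T$, and therefore $T^\top u = Tu = a$. This gives the claimed equality for every $p \ge 1$. For completeness, the underlying identity follows from $\inner{TX}{u} = \inner{X}{T^\top u}$ together with the fact that $TX$ is uniform on $TK$.

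\emph{Step 3: the $\psi_2$ ratio.} The identity $\inner{TX}{u} = \inner{X}{a}$ holds pointwise, so the random variables $\inner{\cdot}{u}$ under the uniform measure on $TK$ and $\inner{\cdot}{a}$ under the uniform measure on $K$ have the same law. The $\psi_2$ norm depends only on the law of a random variable, so
\[
    \snorm{\inner{\cdot}{a}}_{\psi_2(K)} = \snorm{\inner{\cdot}{u}}_{\psi_2(TK)} .
\]
Combining this with the $p=2$ case of Step 2 gives equality of the ratios. The denominators are nonzero whenever $u \neq 0$, because $K$ has nonempty interior, so no marginal in a nonzero direction is degenerate.

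There is no real obstacle here. The only point that needs care is the symmetry $T^\top = T$, which is what lets us identify $a = Tu$ with the vector $T^\top u$ appearing in the invariance identity of \cref{subsec:2.3}.
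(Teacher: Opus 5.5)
Your proof is correct and follows the paper's argument essentially line for line: the determinant computation, the covariance identity $T\Sigma_K T = (\det\Sigma_K)^{1/n}\operatorname{Id}$, the symmetry $T^\top = T$ to identify $a$ with $T^\top u$, and equality of the laws of $\inner{TX}{u}$ and $\inner{X}{a}$ for the $\psi_2$ ratio. Your explicit checks that $TK$ is centered and that the denominators are nonzero for $u \neq 0$ are small additions the paper leaves implicit.
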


\begin{proof}
The determinant identity follows from
\[
    \det T
    =
    (\det\Sigma_K)^{1/2}\det(\Sigma_K^{-1/2})
    =
    1.
\]
If \(X\) is uniformly distributed on \(K\), then \(TX\) is uniformly distributed on \(TK\), and
\[
    \operatorname{Cov}(TX)
    =
    T\Sigma_KT
    =
    (\det\Sigma_K)^{1/n}\operatorname{Id}.
\]
Thus \(TK\) is isotropic. Since \(T\) is symmetric, the change of variables \(y=Tx\) gives
\[
    \margLp{TK}{u}{p}
    =
    \margLp{K}{Tu}{p}
    =
    \margLp{K}{a}{p}.
\]
The \(L^2\) and \(\psi_2\) ratio identities follow from the equality of the one-dimensional random variables
\[
    \inner{TX}{u}
    =
    \inner{X}{T^\top u}
    =
    \inner{X}{a},
\]
where \(X\) is uniformly distributed on \(K\).
\end{proof}
The upshot of \cref{lem:2.1} is that it will suffice to prove \cref{thm:1.1} for isotropic bodies.

\subsection{A one-dimensional absolute moment comparison inequality}\label{subsec:2.4}
We use the following comparison to pass between scales. It is the one-dimensional log-concave absolute moment comparison, equivalently the centroid-body inclusion \(Z_q(K)\subset C\cdot(q/p)Z_p(K)\) appearing in \cite[Equation~2.8]{GPV11}.

\begin{lemma}\label{lem:2.2}
There exists an absolute constant \(C>0\) such that the following holds. Let $K \subset \R^n$ be a convex body of volume one. Then, for every $\theta \in \R^n$ and every $1 \le p \le q$,
\[
    \margLp{K}{\theta}{q}
    \le
    C\cdot \frac{q}{p}
    \margLp{K}{\theta}{p}.
\]
In particular, if $q \le 2p$, then
\[
    \frac{\margLp{K}{\theta}{q}}{\sqrt{q}}
    \le
    2C\cdot \frac{\margLp{K}{\theta}{p}}{\sqrt{p}}.
\]
\end{lemma}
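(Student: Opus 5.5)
The plan is to reduce to a statement about a one-dimensional log-concave density and then apply the classical reverse H\"older inequality for log-concave functions, which comes from Brunn--Minkowski.

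\textbf{Reduction.} Fix $\theta \ne 0$. If $X$ is uniform on $K$, Brunn--Minkowski shows that $\inner{X}{\theta}$ has a log-concave density $f$ on $\R$. Moreover, $\abs{\inner{X}{\theta}}$ has density $g(t) = f(t)+f(-t)$ on $[0,\infty)$. We do not need centering: the statement only concerns $L^p$ norms of $\abs{\inner{X}{\theta}}$.

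\textbf{Main step.} Show that for a log-concave density $f$ on $\R$ and $1\le p\le q$,
\[
\paren*{\int \abs{t}^q f}^{1/q} \le C\frac{q}{p}\paren*{\int \abs{t}^p f}^{1/p}.
\]
I would split $\R$ into the two half-lines $t\ge0$ and $t\le0$. On each half-line the restriction $h$ of $f$ is log-concave on $[0,\infty)$. For such $h$ I would use the standard fact (Borell's lemma, or Berwald/Milman--Pajor) that
\[
\Phi(s) = \frac{1}{\Gamma(s+1)}\int_0^\infty t^{s} h(t)\,dt
\]
is log-concave in $s>-1$. One proof compares $h$ with an exponential $e^{-\lambda t}$ matched at the relevant moment and uses that their difference changes sign at most twice.

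Log-concavity of $\Phi$ gives $\Phi(q)^{1/q} \le \Phi(p)^{1/p}\,\Phi(0)^{1/p-1/q}$. Here $\Phi(0)=\int h\le1$, so
\[
\paren*{\int_0^\infty t^q h}^{1/q} \le \frac{\Gamma(q+1)^{1/q}}{\Gamma(p+1)^{1/p}}\paren*{\int_0^\infty t^p h}^{1/p}.
\]
Stirling's formula gives $\Gamma(s+1)^{1/s}\asymp s$ uniformly for $s\ge1$, so the ratio is at most $Cq/p$. Summing over the two half-lines costs at most a factor $2$: use $(a+b)^{1/q}\le 2\max(a,b)^{1/q}$, and bound each half-line moment by the full $p$-th moment.

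\textbf{Consequence.} The ``in particular'' statement is immediate. If $q\le 2p$, then $C q/p\le 2C$, and $\sqrt{p}\le\sqrt{q}$ lets us divide the left side by $\sqrt q$ and the right side by $\sqrt p$.

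\textbf{Main obstacle.} The only nontrivial input is the log-concavity of $\Phi$, i.e., Borell's reverse H\"older lemma. I would quote it as standard rather than reprove it, as the paper does by citing \cite[Equation~2.8]{GPV11}.
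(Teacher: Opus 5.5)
\textbf{There is a genuine gap:} the half-line bound is derived with the wrong sign on $\Phi(0)$, and as stated it is false.

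Write $p=\frac pq\cdot q+(1-\frac pq)\cdot 0$. Concavity of $\log\Phi$ gives $\Phi(p)\ge\Phi(q)^{p/q}\Phi(0)^{1-p/q}$, that is
\[
\Phi(q)^{1/q}\le\Phi(p)^{1/p}\,\Phi(0)^{-(1/p-1/q)} .
\]
Since $\Phi(0)=\int h=m\le 1$, the factor $m^{-(1/p-1/q)}$ is at least $1$, so you cannot discard it.

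Your displayed half-line inequality fails for sub-probability $h$. For $h(t)=me^{-t}$ the left side is $m^{1/q}\Gamma(q+1)^{1/q}$ and the right side is $m^{1/p}\Gamma(q+1)^{1/q}$. Since $m^{1/q}>m^{1/p}$ whenever $m<1$ and $p<q$, the inequality is violated.

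What log-concavity of $\Phi$ actually gives is a bound for the conditional law on the half-line. Let $Y=\inner{X}{\theta}$, $Y_+=\max(Y,0)$, $m=\PP(Y\ge 0)$ and $\snorm{Y}_p=(\E\abs{Y}^p)^{1/p}$. Then
\[
(\E Y_+^q)^{1/q}\le C\tfrac qp\, m^{1/q}\bigl(\E[Y^p\mid Y\ge 0]\bigr)^{1/p} .
\]
Bounding the half-line moment by the full moment only controls $m^{1/p}(\E[Y^p\mid Y\ge0])^{1/p}$. That leaves an unbounded factor $m^{-(1/p-1/q)}$ when a half-line is light.

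This is exactly where your remark that centering is not needed fails. For centered $K$, Gr\"unbaum's inequality gives $m\ge 1/e$ on both half-lines, the extra factor is at most $e$, and your argument goes through. But the lemma concerns arbitrary volume-one bodies, where one half-line can carry mass $m\to0$. The step can be repaired in two cases:
\begin{itemize}
\item If $m\ge e^{-p}$, then $(\E[Y^p\mid Y\ge0])^{1/p}\le e\,(\E Y_+^p)^{1/p}\le e\snorm{Y}_p$.
\item If $m<e^{-p}$, the median $\mu$ of $Y$ is negative. Log-concavity of $x\mapsto\PP(Y\ge x)$ (compare chord slopes from $\mu$ to $0$) gives $\PP(Y\ge x)\le m\exp\bigl(-x\log(1/(2m))/\abs{\mu}\bigr)$ for $x\ge 0$. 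Hence $(\E[Y^p\mid Y\ge0])^{1/p}\lesssim p\abs{\mu}/\log(1/(2m))\lesssim\abs{\mu}\lesssim\E\abs{Y}$.
\end{itemize}

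A simpler route avoids splitting altogether, and is the standard argument behind the inclusion $Z_q(K)\subset C\frac qp Z_p(K)$ that the paper simply cites. Apply Borell's lemma to the symmetric slab $\{x:\abs{\inner{x}{\theta}}\le e\snorm{Y}_p\}$, which has measure at least $1-e^{-p}$ by Markov. This gives $\PP(\abs{Y}>se\snorm{Y}_p)\le(2e^{-p})^{(s+1)/2}$ for $s\ge1$, and integrating this tail gives the lemma with no centering.

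The rest of your argument is fine: the Stirling estimate, combining the two half-lines, and the ``in particular'' clause.
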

\begin{proof}
The first displayed inequality is precisely the one-dimensional form of \cite[Equation~2.8]{GPV11}. The second follows from \(q/p\le2\) and \(p\le q\).
\end{proof}

\subsection{Negative moments of centroid-body norms}\label{subsec:2.5}
In light of \cref{eq:2.2}, we will need to bound the higher absolute moments of one-dimensional marginals of $K$. The definition of $Z_p(K)^\circ$ suggests that it will be useful to instead understand negative moments of the norm whose unit ball is \(Z_p(K)^\circ\). This section aims to understand the latter quantity.

We first introduce some notation. Let $\sigma$ be the normalized Haar measure on $\Sph$, and let $\gauss$ be the standard Gaussian measure on $\R^n$. For a convex body $K \subset \R^n$ with \(0\in\operatorname{int}(K)\) and $q > 0$, write
\[
    W_{-q}(K)
    =
    \left(
        \int_{\Sph} \snorm{u}_{K^\circ}^{-q}\,d\sigma(u)
   \right)^{-1/q}.
\]
For $-n < q < \infty$ with $q \neq 0$, write
\[
    I_q(K)
    =
    \left(\int_K \abs{x}^q\,dx\right)^{1/q}.
\]
We will also need the corresponding Gaussian negative moment of such a norm. For a convex body \(K\subset\R^n\) with \(0\in\operatorname{int}(K)\), set
\[
    G_{-q}(K)
    =
    \left(
        \int_{\R^n} \snorm{y}_{K^\circ}^{-q}\,d\gauss(y)
   \right)^{-1/q}.
\]

The next lemma connects the different notions through polar integration.
\begin{lemma}\label{lem:2.3}
If $K\subset \R^n$ is a convex body with \(0\in\operatorname{int}(K)\) and $0<q\le n/2$, then
\[
    c\cdot \sqrt{n}\,W_{-q}(K)
    \le
    G_{-q}(K)
    \le
    C\cdot \sqrt{n}\,W_{-q}(K).
\]
\end{lemma}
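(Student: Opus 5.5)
We will pass to polar coordinates and use the homogeneity of the Minkowski functional. Writing $y = r u$ with $r = \abs{y}$ and $u \in \Sph$, we have $\snorm{y}_{K^\circ} = r\snorm{u}_{K^\circ}$. Under $\gauss$, the radius $r$ and the direction $u$ are independent, and $u$ is distributed according to $\sigma$. Hence
\[
    G_{-q}(K)^{-q}
    =
    \int_{\R^n} \snorm{y}_{K^\circ}^{-q}\,d\gauss(y)
    =
    \E\bigl[\abs{g}^{-q}\bigr]\int_{\Sph}\snorm{u}_{K^\circ}^{-q}\,d\sigma(u)
    =
    \E\bigl[\abs{g}^{-q}\bigr]\, W_{-q}(K)^{-q},
\]
where $g$ is a standard Gaussian vector in $\R^n$. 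Since $q<n$, the moment $\E\abs{g}^{-q}$ is finite. Consequently
\[
    G_{-q}(K) = \bigl(\E\abs{g}^{-q}\bigr)^{-1/q}\, W_{-q}(K),
\]
and the lemma reduces to the purely one-dimensional claim that $c\sqrt n \le (\E\abs{g}^{-q})^{-1/q} \le C\sqrt n$ uniformly for $0<q\le n/2$. The geometry of $K$ plays no further role.

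For the upper bound on $(\E\abs{g}^{-q})^{-1/q}$, we apply Jensen's inequality: the map $t\mapsto t^{-q}$ is convex on $(0,\infty)$, so
\[
    \E\abs{g}^{-q} \ge \bigl(\E\abs{g}\bigr)^{-q} \ge \bigl(\E\abs{g}^2\bigr)^{-q/2} = n^{-q/2}.
\]
This gives $(\E\abs{g}^{-q})^{-1/q}\le \sqrt n$, with constant $C=1$.

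The lower bound is the only step needing care, because we need $\E\abs{g}^{-q}\le (c\sqrt n)^{-q}$ uniformly up to $q=n/2$. We compute explicitly using the chi distribution:
\[
    \E\abs{g}^{-q} = 2^{-q/2}\,\frac{\Gamma\bigl(\frac{n-q}{2}\bigr)}{\Gamma\bigl(\frac n2\bigr)}.
\]
Since $\Gamma$ is increasing on $[2,\infty)$ and $(n-q)/2\ge n/4$, it suffices to bound the ratio $\Gamma(m-s)/\Gamma(m)$ with $m=n/2$ and $s=q/2\le m/2$. We use the log-convexity of $\Gamma$, or equivalently Gautschi/Wendel-type inequalities, which give $\Gamma(m-s)/\Gamma(m)\le (C/m)^{s}$ whenever $s\le m/2$. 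One way to see this is that $\log\Gamma(m)-\log\Gamma(m-s)=\int_{m-s}^m\psi$, and the digamma function satisfies $\psi(t)\ge \log t - 1/t \ge \log(m/2) - C'$ for $t\ge m/2$. This yields $\E\abs{g}^{-q}\le (C/n)^{q/2}$, that is, $(\E\abs{g}^{-q})^{-1/q}\ge c\sqrt n$.

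The regime of small $n$, where $m/2$ may be less than $2$, must be handled separately. There $q\le n/2$ is bounded and $(n-q)/2 \ge n/4$ stays bounded away from $0$, so the Gamma ratio is bounded by an absolute constant raised to the power $q$, and the constants can be absorbed.
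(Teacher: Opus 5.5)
Your argument is correct and matches the paper's: both use polar coordinates to write $G_{-q}(K)=\bigl(2^{-q/2}\Gamma(\tfrac{n-q}{2})/\Gamma(\tfrac n2)\bigr)^{-1/q}W_{-q}(K)$. The paper then bounds the Gamma ratio by citing Wendel's inequality on both sides, whereas you use Jensen for the upper bound (with constant $1$) and the digamma estimate $\psi(t)\ge\log t-1/t$ for the lower bound.

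That digamma estimate holds for all $t>0$, and $m=n/2\ge 1/2$ gives $2/m\le 4$, so your lower-bound argument is already uniform in $n$. You can therefore drop the remark about monotonicity of $\Gamma$ on $[2,\infty)$ and the separate small-$n$ paragraph. As written, that paragraph only bounds the ratio by a constant rather than by $C^{q}$, which is not enough as $q\to0$.
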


\begin{proof}
Writing \(y=tu\) with \(u\in\Sph\) and \(t>0\), we have
\[
    \int_{\R^n}\snorm{y}_{K^\circ}^{-q}\,d\gauss(y)
    =
    (2\pi)^{-n/2}
    \left(\int_0^\infty t^{n-q-1}e^{-t^2/2}\,dt\right)
    |S^{n-1}|
    \int_{\Sph}\snorm{u}_{K^\circ}^{-q}\,d\sigma(u).
\]
Thus, applying standard formulas for radial Gaussian moments,
\[
    G_{-q}(K)
    =
    \left(
        \frac{2^{-q/2}\Gamma((n-q)/2)}{\Gamma(n/2)}
    \right)^{-1/q}
    W_{-q}(K).
\]
By Wendel's inequality, applied from both sides to the Gamma ratio, the prefactor above is comparable to \(\sqrt n\) with absolute constants whenever \(0<q\le n/2\).
\end{proof}
The following proposition is a consequence of \cite[Equations~(5.5) and~(5.6)]{GPV11}. We provide a proof, based on the recent small-ball estimates of \cite{Bizeul25}, for completeness. 
\begin{proposition}\label{prop:2.4}
There exist absolute constants $c_0 \in (0,1/4]$ and $c,C > 0$ such that the following holds. Let $K \subset \R^n$ be a isotropic convex body. Then for every $p$ with
\[
    1 \le p \le c_0\cdot n,
\]
one has
\[
    c\cdot \sqrt{p}\,L_K
    \le
    W_{-p}(Z_p(K))
    \le
    C\cdot \sqrt{p}\,L_K,
\]
as well as,
\[
    c\cdot \sqrt{p}\,L_K
    \le
    W_{-2p}(Z_p(K))
    \le
    C\cdot \sqrt{p}\,L_K.
\]
\end{proposition}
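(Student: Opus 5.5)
The plan is to reduce both estimates to negative moments $I_{-p}(K)$ of the Euclidean norm on $K$, using Paouris' identity relating $W_{-p}(Z_p(K))$ to $I_{-p}(K)$. First note that $\snorm{u}_{Z_p(K)^\circ} = h_{Z_p(K)}(u) = \margLp{K}{u}{p}$, so $W_{-q}(Z_p(K))$ is a negative moment of $u \mapsto \margLp{K}{u}{p}$ over the sphere. Paouris' formula states that for $1 \le p \le n-1$,
\[
    W_{-p}(Z_p(K)) \asymp \sqrt{p/n}\; I_{-p}(K),
\]
with absolute constants. I will quote it rather than reprove it. Its proof sketch goes through the Grassmannian $G_{n,p}$: for $E\in G_{n,p}$, $P_E Z_p(K)$ is the $L^p$-centroid body of the marginal density $\pi_E f_K$. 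Paouris' lemma gives $|P_E Z_p(K)|^{1/p} \asymp \sqrt{p}\,\pi_E f_K(0)^{-1/p}$. Moreover, $I_{-p}(K)^{-p}$ equals a dimensional constant times $\int_{G_{n,p}} \pi_E f_K(0)\,d\nu(E)$. Finally, $W_{-p}(Z_p(K))^{-p}$ is comparable, via polar integration inside each $E$, to $\int_{G_{n,p}} |P_E Z_p(K)|^{-1} \,d\nu(E)$ up to the volume-radius normalization. Tracking the constants yields the factor $\sqrt{p/n}$.

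Given this identity, the first pair of estimates reduces to showing
\[
    I_{-p}(K) \asymp \sqrt{n}\,L_K \quad \text{for } 1 \le p \le c_0 n.
\]
The upper bound is immediate from monotonicity of $L^q$ norms: $I_{-p}(K) \le I_2(K) = \sqrt{n}\,L_K$. The lower bound is the main obstacle. Classically it holds only for $p \lesssim \sqrt n$, via Paouris' small-ball estimate and $q_*(K)$. For $p$ up to $c_0 n$, I would invoke the small-ball estimate of \cite{Bizeul25}: for isotropic $K$ and $0<\varepsilon<\varepsilon_0$,
\[
    \PP\bigl(\abs{X} \le \varepsilon \sqrt{n}\,L_K\bigr) \le (C\varepsilon)^{cn}.
\]
Integrating $\abs{X}^{-p} = \int_0^\infty p\,t^{-p-1}\one_{\{\abs{X}\le t\}}\,dt$ against this bound gives
\[
    \E\abs{X}^{-p} \le \bigl(c'\sqrt{n}\,L_K\bigr)^{-p}
\]
as long as $p \le cn/2$. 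This works because the tail integrand $t^{-p-1}(Ct/\sqrt n L_K)^{cn}$ is integrable near $0$ with room to spare. Choosing $c_0$ small relative to $c$ then yields $W_{-p}(Z_p(K)) \asymp \sqrt{p}\,L_K$.

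For the $W_{-2p}$ statement, the upper bound comes from Hölder's inequality. Negative moments are monotone, $W_{-2p}(L) \le W_{-p}(L)$, since $q\mapsto(\int h^{-q})^{-1/q}$ is nonincreasing, and hence $W_{-2p}(Z_p(K)) \le C\sqrt p\,L_K$. For the lower bound I would use \cref{lem:2.2} with $q=2p$. It gives $h_{Z_{2p}(K)} \le 2C\,h_{Z_p(K)}$, that is, $Z_{2p}(K) \subset 2C\,Z_p(K)$. Since $W_{-q}$ is monotone under inclusion and homogeneous,
\[
    W_{-2p}(Z_p(K)) \ge (2C)^{-1} W_{-2p}(Z_{2p}(K)) \ge c\sqrt{2p}\,L_K.
\]
The last step applies the first part at the parameter $2p$, which requires $2p \le c_0 n$; this is arranged by halving $c_0$ (and ensuring $c_0\le 1/4$) in the final statement.
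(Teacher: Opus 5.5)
Your proposal is correct and follows the paper's proof essentially step for step: you quote the Paouris/GPV11 comparison $I_{-q}(K)\asymp\sqrt{n/q}\,W_{-q}(Z_q(K))$, obtain $I_{-q}(K)\asymp\sqrt{n}\,L_K$ from Jensen and from Bizeul's small-ball bound via the layer-cake formula, and handle $W_{-2p}$ by monotonicity in $q$ together with $Z_{2p}(K)\subset C\,Z_p(K)$ and the first estimate at $2p$ (the paper runs the $I_{-q}$ bound up to $q\le 2c_0 n$ rather than halving $c_0$, which amounts to the same thing). One harmless slip is in your heuristic sketch of the quoted formula: Paouris' lemma reads $|P_E Z_p(K)|^{1/p}\asymp \pi_E f_K(0)^{-1/p}$, and the extra $\sqrt{p}$ appears only for the volume radius; since you quote the comparison rather than derive it, this does not affect your argument.
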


\begin{proof}
Let $X$ be uniformly distributed on $K$, and set
\[
    \widetilde X = \frac{X}{L_K}.
\]
Then $\widetilde X$ is isotropic in the probabilistic normalization:
\[
    \E \widetilde X = 0,
    \qquad
    \operatorname{Cov}(\widetilde X) = \operatorname{Id}.
\]
Setting \(y=0\) in the small-ball estimate \cite[Theorem~1]{Bizeul25} yields absolute constants $\varepsilon_0,\beta > 0$ such that, for every $0 < \varepsilon < \varepsilon_0$,
\[
    \PP\paren{\abs{\widetilde X} \le \varepsilon \sqrt{n}}
    \le
    \varepsilon^{\beta n}.
\]
Equivalently, for every $0 < \varepsilon < \varepsilon_0$,
\[
    \PP\paren{\abs{X} \le \varepsilon \sqrt{n}\,L_K}
    \le
    \varepsilon^{\beta n}.
\]
We choose the constant \(c_0\) in the statement small enough so that
\[
    c_0 \le \min\set*{\frac{\beta}{4},\frac14}.
\]

We first turn this small-ball estimate into a lower bound for the negative moments of the Euclidean norm.
Fix a $q$ with
\[
    1 \le q \le 2\cdot c_0\cdot n.
\]
Then
\[
    c\cdot \sqrt{n}\,L_K
    \le
    I_{-q}(K)
    \le
    \sqrt{n}\,L_K.
\]
The upper estimate is immediate from Jensen's inequality and the following second absolute moment bound: 
\[
    I_{-q}(K)
    \le
    I_2(K)
    =
    \left(\int_K \abs{x}^2\,dx\right)^{1/2}
    =
    \sqrt{n}\,L_K.
\]
For the lower estimate, use the layer-cake formula:
\[
    \E \abs{X}^{-q}
    =
    q \int_0^\infty t^{-q-1}\PP\paren{\abs{X} \le t}\,dt.
\]
Split the integral at
\[
    t_0 = \varepsilon_0 \sqrt{n}\,L_K.
\]
Using the small-ball estimate on $(0,t_0)$ and the bound $\PP\paren{\abs{X} \le t} \le 1$ on $(t_0,\infty)$ gives
\[
    \E \abs{X}^{-q}
    \le
    q \int_0^{t_0} t^{-q-1} \paren*{\frac{t}{\sqrt{n}\,L_K}}^{\beta n}\,dt
    +
    \int_{t_0}^\infty q t^{-q-1}\,dt.
\]
Since $q \le 2\cdot c_0\cdot n \le \beta\cdot n/2$, this gives
\[
    \E \abs{X}^{-q}
    \le
    \paren{\sqrt{n}\,L_K}^{-q}
    \left(
        \frac{q \varepsilon_0^{\beta n-q}}{\beta n-q}
        +
        \varepsilon_0^{-q}
    \right)
    \le
    \left(\frac{C}{\sqrt{n}\,L_K}\right)^q.
\]
Hence
\[
    I_{-q}(K) \ge c\cdot \sqrt{n}\,L_K,
\]
which proves the displayed lower bound for $I_{-q}(K)$.

We next use the negative moment comparison between negative moments of the Euclidean norm and centroid bodies, in the form given in \cite[Equation~5.2]{GPV11}; see also \cite{Paouris12b}. For every $q$ with $1 \le q < n$,
\[
    c\cdot \sqrt{\frac{n}{q}}\,W_{-q}(Z_q(K))
    \le
    I_{-q}(K)
    \le
    C\cdot \sqrt{\frac{n}{q}}\,W_{-q}(Z_q(K));
\]
Applying this with $q=p$ shows
\[
    c\cdot \sqrt{p}\,L_K
    \le
    W_{-p}(Z_p(K))
    \le
    C\cdot \sqrt{p}\,L_K.
\]
Applying the same argument with $q=2p$ gives
\[
    c\cdot \sqrt{2p}\,L_K
    \le
    W_{-2p}(Z_{2p}(K))
    \le
    C\cdot \sqrt{2p}\,L_K.
\]
By \cref{lem:2.2},
\[
    Z_{2p}(K) \subset C\cdot Z_p(K),
\]
and therefore
\[
    W_{-2p}(Z_p(K))
    \ge
    c\cdot W_{-2p}(Z_{2p}(K))
    \ge
    c\cdot \sqrt{p}\,L_K.
\]
On the other hand, the map $q \mapsto W_{-q}(K)$ is decreasing for every fixed convex body $K$ with \(0\in\operatorname{int}(K)\), because it is the inverse $L^q$ norm of the random variable $\snorm{U}_{K^\circ}^{-1}$, where $U$ is uniformly distributed on $\Sph$. Hence
\[
    W_{-2p}(Z_p(K))
    \le
    W_{-p}(Z_p(K))
    \le
    C\cdot \sqrt{p}\,L_K.
\]
The displayed estimates follow.
\end{proof}

\subsection{Gaussian correlation}\label{subsec:2.6}
The last component we require, which will help us implement \cref{eq:1.4}, is the Gaussian correlation inequality.
\begin{proposition}[\textnormal{\cite[Theorem~1]{Royen14}}]\label{prop:2.5}
Let $\gauss$ be the standard Gaussian measure on $\R^n$. If $A,B \subset \R^n$ are symmetric convex Borel sets, then
\[
    \gauss(A \cap B) \ge \gauss(A)\gauss(B).
\]
Consequently, for any finite family $A_1,\dots,A_m$ of symmetric convex sets,
\[
    \gauss\paren{\bigcap_{j=1}^m A_j}
    \ge
    \prod_{j=1}^m \gauss(A_j).
\]
\end{proposition}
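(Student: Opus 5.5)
The first assertion is exactly Royen's theorem \cite[Theorem~1]{Royen14}, so I will not reprove it but only explain how it is used. For completeness, the standard route is as follows. By inner approximation and monotone convergence one reduces to symmetric polytopes, and hence to finite intersections of symmetric slabs $\set{\abs{\inner{y}{v_i}}\le 1}$. The claim then becomes a statement about a centered Gaussian vector $(Z_1,\dots,Z_m)$ with covariance $\Sigma$, where $Z_i=\inner{Y}{v_i}$ and $Y\sim\gauss$, split into two blocks. One interpolates $\Sigma(\tau)$, multiplying the off-diagonal blocks by $\tau\in[0,1]$. Royen shows that $\tau\mapsto \PP(\max_i \abs{Z_i(\tau)}/a_i\le 1)$ is nondecreasing, using the Laplace transform of the squared vector $(Z_i^2/2)$, which has a multivariate gamma distribution. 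The comparison at $\tau=0$ versus $\tau=1$ is precisely $\gauss(A)\gauss(B)\le\gauss(A\cap B)$. I would simply cite this step.

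The consequence for finitely many sets follows by induction on $m$. The case $m=1$ is trivial, and $m=2$ is Royen's theorem. For the inductive step I would set $A'=\bigcap_{j=1}^{m-1}A_j$. The only point to check is that $A'$ is again a symmetric convex Borel set. This holds because an intersection of convex sets is convex, an intersection of sets invariant under $y\mapsto -y$ is invariant under $y\mapsto -y$, and a finite intersection of Borel sets is Borel. Applying the two-set inequality to $A'$ and $A_m$, and then the induction hypothesis to $A'$, gives
\[
    \gauss\paren*{\bigcap_{j=1}^m A_j}\ge \gauss(A')\,\gauss(A_m)\ge \prod_{j=1}^m\gauss(A_j).
\]

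The only real obstacle is the two-set inequality itself, which is deep and is taken from \cite{Royen14}. The extension to finite families is routine bookkeeping. In the later application the sets will be the $A_p$ from \cref{eq:1.3}. Each $A_p$ is the dilated unit ball of the norm $y\mapsto\margLp{K}{y}{p}$, so it is closed, convex and symmetric, and the proposition applies to the family $\set{A_p}_{p\in\mc D}$ directly.
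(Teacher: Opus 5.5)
Your proposal is correct and matches the paper, which likewise takes the two-set inequality directly from Royen's theorem (pointing to Lata{\l}a--Matlak for an exposition) rather than reproving it. The paper leaves the finite-family consequence implicit, and it follows by the induction you describe, using that a finite intersection of symmetric convex Borel sets is again symmetric, convex and Borel.
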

See also the exposition of Lata{\l}a and Matlak \cite{LM17}. In the proof below we apply this to the convex sets \(A_p\) from \cref{eq:1.3}, which are symmetric because \(Z_p(K)\) is symmetric and \(A_p\) is a dilate of \(Z_p(K)^\circ\).

\section{\texorpdfstring{Proof of \cref{thm:1.1}}{Proof of Theorem 1.1}}\label{sec:3}

By \cref{lem:2.1}, it is enough for the main construction to work in isotropic position. Throughout \cref{subsec:3.1,subsec:3.2,subsec:3.3,subsec:3.4}, \(K\) denotes a centered isotropic convex body of volume one.

\subsection{A fixed scale estimate}\label{subsec:3.1}
Fix $1 \le p \le c_0\cdot n$, where $c_0$ is the constant from \cref{prop:2.4}, and let $C_0$ be some large constant. Define
\begin{equation}\label{eq:3.1}
    A_p
    =
    \set{
        y \in \R^n : \margLp{K}{y}{p} \le C_0\cdot \sqrt{np}\,L_K
    }.
\end{equation}
Equivalently,
\[
    A_p = C_0\cdot \sqrt{np}\,L_K\,Z_p(K)^\circ,
\]
and so it is clear that $A_p$ is symmetric and convex.

\begin{lemma}\label{lem:3.1}
For every $1 \le p \le c_0\cdot n$, if $C_0$ from \cref{eq:3.1} is sufficiently large, then
\[
    \gauss(A_p) \ge e^{-C'\cdot p}.
\]
Here \(C'>0\) is an absolute constant independent of \(p,n\), and \(K\).
\end{lemma}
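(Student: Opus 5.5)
We work with the Gaussian random variable $N(y)=\snorm{y}_{Z_p(K)^\circ}=\margLp{K}{y}{p}$ for $y\sim\gauss$. In this notation $A_p=\{N\le C_0\sqrt{np}\,L_K\}$. The plan is a Paley--Zygmund argument applied to $N^{-p}$. The moments of $N^{-p}$ are controlled by \cref{prop:2.4} combined with \cref{lem:2.3}.

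\emph{Step 1: Gaussian negative moments.} Since $p\le c_0 n\le n/4$, both $p$ and $2p$ are at most $n/2$, so \cref{lem:2.3} applies with $q=p$ and $q=2p$ to the body $Z_p(K)$. Together with \cref{prop:2.4} this gives
\[
G_{-p}(Z_p(K))\ \text{and}\ G_{-2p}(Z_p(K)) \ \text{both lie between}\ c_1\sqrt{np}\,L_K \ \text{and}\ C_1\sqrt{np}\,L_K .
\]
Unwinding the definition of $G_{-q}$:
\[
\E N^{-p}=G_{-p}^{-p}\ge (C_1\sqrt{np}L_K)^{-p},\qquad \E N^{-2p}=G_{-2p}^{-2p}\le (c_1\sqrt{np}L_K)^{-2p}.
\]

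\emph{Step 2: Paley--Zygmund.} Set $Z=N^{-p}\ge0$ and apply Paley--Zygmund with parameter $\theta=1/2$:
\[
\PP\bigl(Z\ge \tfrac12\E Z\bigr)\ge \tfrac14\frac{(\E Z)^2}{\E Z^2}\ge \tfrac14\Bigl(\frac{c_1}{C_1}\Bigr)^{2p}\ge e^{-C'p}.
\]
The event $Z\ge \frac12\E Z$ is contained in $\{N\le 2^{1/p}(\E Z)^{-1/p}\}$. By Step 1, $(\E Z)^{-1/p}=G_{-p}\le C_1\sqrt{np}L_K$. Hence, on this event, $N\le 2C_1\sqrt{np}\,L_K$. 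Choosing $C_0\ge 2C_1$ puts the event inside $A_p$, which gives $\gauss(A_p)\ge e^{-C'p}$.

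\emph{Main obstacle.} The key point is getting a ratio $\E Z^2/(\E Z)^2$ that is only exponential in $p$, not in $n$. This requires the two-sided bound at level $2p$, namely the lower bound $W_{-2p}(Z_p(K))\ge c\sqrt p L_K$. That bound is exactly the second estimate of \cref{prop:2.4}, which rests on Bizeul's small-ball bound. It also requires the condition $2p\le n/2$, so that \cref{lem:2.3} remains valid at level $2p$; this is ensured by $c_0\le 1/4$. Beyond this, everything is bookkeeping of constants.
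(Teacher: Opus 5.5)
Your proposal is correct and is essentially the paper's own proof. It applies Paley--Zygmund to $N_p(Y)^{-p}$ with $Y\sim\gauss$, with $\E N_p^{-p}=G_{-p}(Z_p(K))^{-p}$ and $\E N_p^{-2p}=G_{-2p}(Z_p(K))^{-2p}$ both controlled at scale $\sqrt{np}\,L_K$ via \cref{lem:2.3} and \cref{prop:2.4} (using $c_0\le 1/4$), and then takes $C_0\ge 2C_1$; the only cosmetic difference is that you keep $c_1,C_1$ explicit where the paper packages them into a single ratio $\alpha$ with $a_p\le\alpha b_p$.
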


\begin{proof}
For $y \in \R^n$, recall the definition of the centroid body \eqref{eq:2.1}, and set
\[
    N_p(y) = \margLp{K}{y}{p} = \|y\|_{Z_p(K)^\circ}.
\]
We will apply the Paley--Zygmund inequality to a negative power of $N_p$. To this end, set
\[
    a_p = G_{-p}(Z_p(K)),
    \qquad
    b_p = G_{-2p}(Z_p(K)).
\]
Since $c_0 \le 1/4$ and $p \le c_0\cdot n$, both $p$ and $2p$ are at most $n/2$. Hence \cref{lem:2.3} and \cref{prop:2.4} give the two-sided estimates
\[
    c\cdot \sqrt{np}\,L_K
    \le
    a_p
    \le
    C\cdot \sqrt{np}\,L_K,
\]
\[
    c\cdot \sqrt{np}\,L_K
    \le
    b_p
    \le
    C\cdot \sqrt{np}\,L_K.
\]
In particular, there is an absolute constant $\alpha \ge 1$ such that
\[
    a_p \le \alpha b_p.
\]
Now, let $Y \sim \gamma_n$, and apply the Paley--Zygmund inequality to the non-negative random variable $\xi = N_p(Y)^{-p}$, in a similar fashion to \cite{GPV11}. Then
\[
    \E \xi = a_p^{-p},
    \qquad
    \E \xi^2 = b_p^{-2p}.
\]
Since $a_p \le \alpha b_p$,
\[
    \E \xi^2
    =
    b_p^{-2p}
    \le
    \alpha^{2p} a_p^{-2p}
    =
    \alpha^{2p} (\E \xi)^2.
\]
Hence
\[
    \PP\Big(\xi \ge \frac{1}{2}\E \xi\Big)
    \ge
    \frac{1}{4}\alpha^{-2p}
    \ge
    e^{-C'\cdot p}.
\]
The event $\{\xi \ge \frac{1}{2}a_p^{-p}\}$ is the same as
$\{
    N_p(Y) \le 2^{1/p} a_p \le 2 a_p.
\}$
Since $a_p \le C\cdot \sqrt{np}\,L_K$, as long as $C_0$ is sufficiently large, we obtain
\[
    \gauss\set{y : \margLp{K}{y}{p} \le C_0\cdot \sqrt{np}\,L_K}
    \ge
    e^{-C'\cdot p}.
\]
This proves the lemma.
\end{proof}

\subsection{Intersecting the fixed scale estimates}\label{subsec:3.2}
We assume
\[
    c_0\cdot n \ge 1.
\]
The other case can be absorbed into the absolute constants.
Let
\[
    \mc D = \set{2^j : 0 \le j,\ 2^j \le c_0\cdot n}.
\]
We may decrease the absolute constant \(c_0\) from \cref{prop:2.4} when needed below; the estimates of \cref{prop:2.4} remain valid for the smaller range. 
For each $p \in \mc D$, let $A_p$ be the set in \cref{eq:3.1}. Set
\begin{equation}\label{eq:3.2}
    A = \bigcap_{p \in \mc D} A_p.
\end{equation}

\begin{lemma}\label{lem:3.2}
One has
\[
    \gauss(A) \ge e^{-2C'\cdot c_0\cdot n}.
\]
\end{lemma}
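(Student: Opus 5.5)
The plan is to combine the fixed-scale estimate of \cref{lem:3.1} with the Gaussian correlation inequality, \cref{prop:2.5}, and then bound the resulting sum over the dyadic set $\mc D$ by a geometric series.

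First, each $A_p$ for $p\in\mc D$ is symmetric and convex, since it is the dilate $C_0\sqrt{np}\,L_K\,Z_p(K)^\circ$ of the polar of a symmetric convex body. The family $\set{A_p}_{p\in\mc D}$ is finite. Applying \cref{prop:2.5} to it gives
\[
    \gauss(A) \ge \prod_{p\in\mc D}\gauss(A_p).
\]
Every $p\in\mc D$ satisfies $1\le p\le c_0 n$, so \cref{lem:3.1} applies and yields $\gauss(A_p)\ge e^{-C'p}$. Hence
\[
    \gauss(A)\ge \exp\Big(-C'\sum_{p\in\mc D}p\Big).
\]

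Second, we bound the sum. Let $J$ be the largest integer with $2^J\le c_0 n$; it exists because $c_0 n\ge 1$. Then
\[
    \sum_{p\in\mc D}p=\sum_{j=0}^{J}2^j = 2^{J+1}-1 < 2\cdot 2^J\le 2c_0 n.
\]
Substituting this gives $\gauss(A)\ge e^{-2C'c_0 n}$, as claimed.

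The argument has no real obstacle. The only points to check are that Royen's inequality needs symmetry and convexity, which holds here, and that the constant $C'$ in \cref{lem:3.1} is uniform in $p$. This uniformity is what keeps the geometric sum, rather than $|\mc D|$ times the worst term, as the exponent.
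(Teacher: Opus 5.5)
Your proposal is correct and follows the paper's proof essentially verbatim: Gaussian correlation (\cref{prop:2.5}) applied to the symmetric convex sets $A_p$, the bound $\gauss(A_p)\ge e^{-C'p}$ from \cref{lem:3.1}, and the dyadic geometric sum $\sum_{p\in\mc D}p\le 2c_0 n$. Your explicit computation $2^{J+1}-1<2\cdot 2^J\le 2c_0n$ is a slightly more detailed version of the paper's one-line justification of that sum.
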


\begin{proof}
Each $A_p$ is symmetric and convex, so by \cref{prop:2.5},
\[
    \gauss(A)
    =
    \gauss\paren{\bigcap_{p \in \mc D} A_p}
    \ge
    \prod_{p \in \mc D} \gauss(A_p).
\]
Using \cref{lem:3.1},
\[
    \prod_{p \in \mc D} \gauss(A_p)
    \ge
    \prod_{p \in \mc D} e^{-C'\cdot p}
    =
    \exp\paren{-C' \sum_{p \in \mc D} p}.
\]
Since $\mc D$ consists of powers of two and its largest element is at most $c_0\cdot n$,
\[
    \sum_{p \in \mc D} p \le 2\cdot c_0\cdot n.
\]
Hence
\[
    \gauss(A) \ge e^{-2\cdot C'\cdot c_0\cdot n}.\qedhere
\]
\end{proof}
\cref{lem:3.2} implies the existence of a vector $y \in A$ with upper bounds on \(\margLp{K}{y}{p}\) for every $p \in \mc D$. However, \(y\) could be very small; in particular \(0\in A\). To obtain meaningful bounds, we remove a family of small sublevel sets from \(A\). The volume bound from \cref{lem:3.2} will show that, after this truncation, enough of \(A\) remains to build a large orthonormal set.

To implement this strategy, fix a small $\varepsilon > 0$, and for $p \in \mc D$, define
\begin{equation}\label{eq:3.3}
    B_p
    =
    \set{
        y \in \R^n : \margLp{K}{y}{p} \le \varepsilon\cdot \sqrt{np}\,L_K
    }.
\end{equation}
Write
\begin{equation}\label{eq:3.4}
    K_p = \frac{1}{\sqrt{p}\,L_K} Z_p(K),
\end{equation}
so that $B_p = \varepsilon\cdot \sqrt{n}\,K_p^\circ$.

\begin{lemma}\label{lem:3.3}
Let $\beta \in (0,1)$. If $c_0$ is sufficiently small, then one can choose $\varepsilon$ in \cref{eq:3.3} sufficiently small such that for every subspace $F \subset \R^n$ with $\dim F \ge \beta n$ one has
\[
    (A \cap F) \setminus \bigcup_{p \in \mc D} (B_p \cap F) \neq \varnothing.
\]
\end{lemma}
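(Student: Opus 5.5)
We compare Gaussian measures inside $F$. Write $k=\dim F\ge\beta n$ and let $\gamma_F$ be the standard Gaussian measure on $F$. The plan is to show
\[
\gamma_F(A\cap F)\ >\ \sum_{p\in\mc D}\gamma_F(B_p\cap F),
\]
which forces $A\cap F$ not to be covered by the union of the $B_p\cap F$.

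\textbf{Lower bound for $A\cap F$.} Apply Gaussian correlation (\cref{prop:2.5}) in $\R^n$ to the symmetric convex sets $A$ and the slab cylinder. Concretely, use a Gaussian-measure comparison for sections through the origin. The standard fact we want is that, for a symmetric convex set $S\subset\R^n$ and a subspace $F$, $\gamma_F(S\cap F)\ge\gamma_n(S)$. This holds because $S\subset (S\cap F)+F^\perp$ fails in general, so instead we argue as follows. Write $S\cap F$ via Anderson's inequality: for $Y=(Y_F,Y_{F^\perp})$ one has $\gamma_n(S)=\E\,\gamma_F(S_{Y_{F^\perp}})$, where $S_z=\{x\in F:(x,z)\in S\}$. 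The function $z\mapsto\gamma_F(S_z)$ is maximized at $z=0$ for symmetric convex $S$, by log-concavity and evenness. Hence
\[
\gamma_F(A\cap F)\ge\gamma_n(A)\ge e^{-2C'c_0n}
\]
by \cref{lem:3.2}.

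\textbf{Upper bound for each $B_p\cap F$.} We have $B_p\cap F=\varepsilon\sqrt n\,(K_p^\circ\cap F)$, and $K_p^\circ\cap F=(P_FK_p)^{\circ}$, the polar inside $F$. The Gaussian measure is at most the Lebesgue measure times $(2\pi)^{-k/2}$, so
\[
\gamma_F(B_p\cap F)\le (2\pi)^{-k/2}(\varepsilon\sqrt n)^k\,|(P_FK_p)^\circ|.
\]
By Blaschke--Santal\'o in $F$ (applied to the symmetric body $P_FK_p$), $|(P_FK_p)^\circ|\le |B_2^k|^2/|P_FK_p|$. It therefore suffices to bound $|P_FK_p|^{1/k}$ below by $c/\sqrt k$ times a constant, that is, $\vrad(P_FK_p)\ge c$.

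For this we use that $Z_p(K)\supset Z_1(K)\supset c L_K B_2^n$. This follows from the isotropic position and log-concavity: $\E|\inner{X}{\theta}|\ge c(\E\inner{X}{\theta}^2)^{1/2}$ by \cref{lem:2.2}. Hence $P_FK_p\supset \frac{c}{\sqrt p}B_F$, which gives only $\vrad\ge c/\sqrt p$. This is too weak for large $p$, and it is \textbf{the main obstacle}. To handle it, I would use the stronger volume estimate $\vrad(P_F Z_p(K))\ge c\sqrt p\,L_K$ for $p\le c_0 k$, which is Paouris' projection lower bound, also derivable from \cref{prop:2.4} applied in the spirit of $W_{-p}$. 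Indeed, \cref{prop:2.4} and \cref{lem:2.3} say that $\snorm{Y}_{K_p^\circ}$ is typically of order $\sqrt n$ in the negative-moment sense. Restricted to $F$, the analogous statement for the isotropic marginal $P_FX$ holds because projections of isotropic log-concave vectors are isotropic, with $Z_p(P_FK)=P_FZ_p(K)$. Consequently $\gamma_F(\{\snorm{y}_{K_p^\circ}\le \varepsilon\sqrt n\})$ can be bounded directly by Markov's inequality on the negative moment $G_{-q}$ with $q\approx c_0k$ in $F$:
\[
\gamma_F(B_p\cap F)\le \varepsilon^{q}\sqrt{n}^{\,q}\,\E\snorm{Y_F}_{K_p^\circ}^{-q}\le (C\varepsilon\sqrt{n/k})^{q}\le (C_\beta\varepsilon)^{c_0\beta n}.
\]
To justify this, the negative moment of order $q\ge p$ of the norm of $K_p^\circ$ on $F$ must be controlled. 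I would obtain it from the small-ball estimate of \cite{Bizeul25} for $P_FX$, passed through the \cite{GPV11} comparison $I_{-q}\simeq\sqrt{k/q}\,W_{-q}(Z_q)$ and the inclusion $Z_p\subset Z_q$.

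\textbf{Conclusion.} Summing over the $\le\log_2 n+1$ values of $p$ gives
\[
\sum_{p\in\mc D}\gamma_F(B_p\cap F)\le (\log_2 n+1)\,(C_\beta\varepsilon)^{c_0\beta n}.
\]
Since $c_0$ appears in both exponents, with $2C'c_0n$ on the $A$ side and $c_0\beta n$ on the $B$ side, choose $\varepsilon$ with $(C_\beta\varepsilon)^{\beta}<e^{-3C'}$. Then for $n$ large the union bound is smaller than $\gamma_F(A\cap F)$; small $n$ is absorbed into the constants.
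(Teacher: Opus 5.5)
\textbf{Verdict: genuine gap.} Your lower bound $\gamma_F(A\cap F)\ge\gauss(A)$ is correct once the false starts are discarded. It is the paper's fiber argument: $z\mapsto\gamma_F(A_z)$ is even and log-concave, hence maximal at $z=0$. The union-bound structure is also sound. The gap is the upper bound for $B_p\cap F$, which is the real content of the lemma.

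You correctly reduce this to $\vrad_F(P_FZ_p(K))\ge c\sqrt p\,L_K$. Equivalently, with $h_L(y)=\snorm{y}_{L^\circ}$ the support function, you need control of Gaussian negative moments of $h_{P_FZ_p(K)}$ of order $q\approx k$. Neither justification you offer delivers this.
\begin{itemize}
\item \cref{prop:2.4} controls only negative moments of order $p$ and $2p$, and such low-order averages do not control volume. Markov at order $q=p$ gives only $\gamma_F(B_p\cap F)\le (C_\beta\varepsilon)^p$. For small $p$ this is nowhere near the available lower bound $e^{-2C'c_0n}$.
\item Your chain for order $q\ge p$ uses $Z_p\subset Z_q$ in the wrong direction. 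It gives $h_{Z_p}\le h_{Z_q}$, hence $\E\, h_{P_FZ_p}(Y_F)^{-q}\ge \E\, h_{P_FZ_q}(Y_F)^{-q}$. So smallness of the right-hand side says nothing about the left.
\item The inclusion that does point the right way, $Z_q\subset C(q/p)Z_p$ from \cref{lem:2.2}, costs a factor $q/p$. It yields only $\gamma_F(B_p\cap F)\le (C_\beta\varepsilon\sqrt{q/p})^q$, which is useless when $p\ll k$.
\end{itemize}

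\textbf{Missing ingredient.} You need a volume lower bound for centroid bodies valid over the whole range $1\le p\le d$: for an isotropic log-concave $\mu$ on $\R^d$, $|Z_p(\mu)|^{1/d}\ge c\sqrt{p/d}\,L_\mu^{-1}$ (the paper takes this from \cite[Equation~(3.15)]{GSTV15}). Apply it to the rescaled marginal $\mu_F$, using $P_FZ_p(K)=Z_p(\mu_F)$. This gives $\vrad_F(P_FZ_p(K))\ge c\sqrt p\,L_K/L_{\mu_F}$. You then still need $L_{\mu_F}\le C$, and this is where the resolution of the slicing problem enters. The projected measure may a priori have a large isotropic constant, and your plan never confronts this.

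\textbf{How to close it.} With these two inputs your first route finishes at once, and it is exactly the paper's proof. The Gaussian density bound plus Blaschke--Santal\'o in $F$ give $\gamma_F(B_p\cap F)\le (C_\beta\varepsilon)^k$. The same inputs would also rescue your Markov route. For $q\le k$ and symmetric $L\subset F$ one has $W_{-q}(L)\ge W_{-k}(L)=\vrad_F(L^\circ)^{-1}\ge\vrad_F(L)$, which supplies the needed high-order negative moment bound.
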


\begin{proof}
Let $F \subset \R^n$ be a subspace of dimension $d \ge \beta n$. Write $P_F$ for the orthogonal projection onto $F$, let $\gamma_F$ denote the standard Gaussian measure on $F$, and set
\[
    B_2^F = B_2^n \cap F,
\]
for the unit Euclidean ball in $F$. For a measurable set $C \subset F$, consider the volume-radius
\[
    \vrad_F(C) = \paren*{\frac{|C|}{|B_2^F|}}^{1/d}.
\]
We first compare the section of $A$ with the full Gaussian measure:
\[
    \gamma_F(A \cap F) \ge \gauss(A).
\]
Indeed, decompose $\R^n = F \oplus F^\perp$ and, for $z \in F^\perp$, define
\[
    \varphi(z)
    =
    \int_F \one_A(x+z) e^{-\abs{x}^2/2}\,dx.
\]
The function
\[
    (x,z) \mapsto \one_A(x+z)\exp\paren*{-\frac{\abs{x}^2}{2}}
\]
is even and log-concave on $F \oplus F^\perp$. By the Pr\'ekopa--Leindler inequality, $\varphi$ is even and log-concave on $F^\perp$, hence maximized at the origin. Thus
\[
    \gauss(A)
    =
    (2\pi)^{-n/2}
    \int_{F^\perp} e^{-\abs{z}^2/2} \varphi(z)\,dz
    \le
    (2\pi)^{-d/2} \varphi(0)
    =
    \gamma_F(A \cap F).
\]
Since the Gaussian density on $F$ is bounded above by $(2\pi)^{-d/2}$, \cref{lem:3.2} and the previous inequality give
\[
    |A \cap F| \ge (2\pi)^{d/2}\gauss(A)\ge(2\pi)^{d/2}  e^{-2C'\cdot c_0\cdot n},
\]
and therefore
\[
    \vrad_F(A \cap F)
    \ge
    c\cdot \sqrt{d}\,\exp\paren{-2C'\cdot c_0\cdot n/d}.
\]
Since $d \ge \beta n$, choosing $c_0 > 0$ sufficiently small yields
\[
    \vrad_F(A \cap F) \ge c_\beta\cdot \sqrt{d} \ge c'_\beta\cdot \sqrt{n}.
\]

We next bound the volume-radius of the sections \(B_p\cap F\). The normalization in \cref{eq:3.4} is harmless because it is only a homothety: \(P_FK_p=(\sqrt p\,L_K)^{-1}P_FZ_p(K)\), and volume-radius scales linearly under homotheties. For every $p \in \mc D$, one has
\[
    B_p \cap F
    =
    \varepsilon\cdot \sqrt{n}\,\paren{P_F K_p}^\circ,
\]
where the polar is taken inside $F$. Let $\mu_F$ be the push-forward of the uniform probability measure on $K$ under $P_F$. Then $\mu_F$ is a log-concave probability measure on $F$ with covariance matrix $L_K^2 \operatorname{Id}_F$, and
\[
    (P_F Z_p(K))^\circ
    =
    Z_p(K)^\circ\cap F
    =
    Z_p(\mu_F)^\circ\footnote{The set $Z_q(\mu)$ is defined analogously to $Z_q(K)$ with $\mu$ being an arbitrary probability measure instead of the uniform measure on a convex body $K$.},
\]
where the polars are taken inside \(F\). Indeed, for \(\theta\in F\), the condition
\(\theta\in Z_p(K)^\circ\) is exactly
\(\margLp{K}{\theta}{p}\le1\), which is the same as
\(\margLp{\mu_F}{\theta}{p}\le1\). Thus $P_F Z_p(K)=Z_p(\mu_F)$, in the same spirit as the identities from \cref{subsec:2.3}.
One has the following volume-radius lower bound for \(L^p\)-centroid bodies, which holds for every isotropic (with an identity covariance) log-concave probability measures $\mu$ in \(\R^d\) \cite[Equation~(3.15)]{GSTV15}:
\[
    \abs{Z_p(\mu)}^{1/d}
    \ge
    \tilde{c} \cdot \sqrt{\frac{p}{d}}\,L_\mu^{-1},
    \qquad \text{for all } 1\le p\le d,
\]
where \(L_\mu\) is the isotropic constant of \(\mu\), and volume is taken in the ambient \(d\)-dimensional space. Equivalently, if $B_2^F$ is the radius one Euclidean ball in $F$ then since \(\abs{B_2^F}^{1/d}\) is of order \(d^{-1/2}\) we see,
\[
    \vrad_F(Z_p(\mu))
    \ge
    c\cdot \sqrt p\,L_\mu^{-1}.
\]
Apply this to the push-forward of \(\mu_F\) under the homothety \(x\mapsto L_K^{-1}x\). This is an isotropic log-concave probability measure on \(F\), %and by the boundedness of isotropic constants \cite{KL25,Bizeul25}, its isotropic constant is at most an absolute constant. Since centroid bodies commute with homotheties
and since \(Z_p(\mu_F)=P_FZ_p(K)\), the preceding bound gives
\[
    \vrad_F\paren{L_K^{-1}P_FZ_p(K)}
    \ge
    c\cdot \sqrt p L_{\mu_F}^{-1}
\]
whenever $p \le d$. Since
\[
    p \le c_0\cdot n \le \frac{c_0}{\beta}\cdot d,
\]
after decreasing $c_0$ again if necessary, this applies to every $p \in \mc D$. Therefore
\[
    \vrad_F\paren{P_FZ_p(K)}
    \ge
    c\cdot \sqrt p\,L_KL_{\mu_F}^{-1} \geq c'\sqrt{p}L_K,
\]
where in the second inequality we have bounded $L_\mu$ by some absolute constant, as in \cite{KL25,Bizeul25}.
Hence, with $K_p$ as in \cref{eq:3.4}, 
\[
    \vrad_F(P_F K_p)
    =
    \frac{\vrad_F(P_F Z_p(K))}{\sqrt{p}\,L_K}
    \ge
    c'.
\]
Since \(P_FK_p\) is symmetric, the Blaschke--Santal\'o inequality in $F$ implies
\[
    \vrad_F\paren{(P_F K_p)^\circ} \le \frac{1}{c'}.
\]
Thus
\[
    \vrad_F(B_p \cap F)
    =
    \varepsilon\cdot \sqrt{n}\,\vrad_F\paren{(P_F K_p)^\circ}
    \le
    \frac{1}{c'}\cdot \varepsilon\cdot \sqrt{n}.
\]
It follows that
\[
    \frac{|B_p \cap F|}{|A \cap F|}
    =
    \paren*{\frac{\vrad_F(B_p \cap F)}{\vrad_F(A \cap F)}}^d
    \le
    (C_\beta \cdot \varepsilon)^d.
\]
Since $|\mc D| \le n$, choosing $\varepsilon > 0$ sufficiently small yields
\[
    \sum_{p \in \mc D} |B_p \cap F|
    \le
    |\mc D| (C_\beta \cdot \varepsilon)^d |A \cap F|
    <
    |A \cap F|,
\]
and we can conclude with
\[
    (A \cap F) \setminus \bigcup_{p \in \mc D} (B_p \cap F) \neq \varnothing.\qedhere
\]
\end{proof}
\begin{remark}
    In the proof of \cref{lem:3.3}, we have used the resolution of the slicing conjecture to bound the term $\frac{L_K}{L_{\mu_F}}$. This seems necessary to control the isotropic constant after projecting $K$ onto $F$. Let us point out, though, that if one takes $F = \R^n$, then there is no projection and the terms end up canceling each other. In light of the next lemma, this choice of $F$ corresponds to choosing a single subgaussian direction, rather than some larger orthonormal set.
\end{remark}
We now use \cref{lem:3.3} iteratively to choose many directions satisfying the absolute moment bounds. We state the selection in a form that also keeps track of orthogonality after an arbitrary invertible linear map, which will be useful when we pass back from isotropic position.
\begin{lemma}\label{lem:3.4}
Let \(T:\R^n\to\R^n\) be an invertible linear map. There exists a set $\Theta=\{\theta_1,\dots,\theta_m\}\subset\Sph$, with \(m=\lceil 9n/10\rceil\), such that \(T\theta_1,\dots,T\theta_m\) are pairwise orthogonal and, for every $\theta\in\Theta$ and every $p\in\mc D$,
\begin{equation}\label{eq:3.5}
    c_1\cdot \sqrt{p}\,L_K
    \le
    \margLp{K}{\theta}{p}
    \le
    C_1\cdot \sqrt{p}\,L_K.
\end{equation}
\end{lemma}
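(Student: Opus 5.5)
We construct $\Theta$ greedily, applying \cref{lem:3.3} to a shrinking sequence of subspaces that encode the $T$-orthogonality constraint. Fix $\beta = 1/20$ (any fixed $\beta<1/10$ will do). With this $\beta$, choose $c_0$ and $\varepsilon$ as in \cref{lem:3.3}. Suppose $\theta_1,\dots,\theta_{k-1}$ have been chosen with $k \le m = \lceil 9n/10\rceil$. Set
\[
F_k = \set{y \in \R^n : \inner{Ty}{T\theta_j} = 0 \text{ for } 1 \le j \le k-1} = \set{y : \inner{y}{T^\top T\theta_j} = 0,\ j<k}.
\]
Then $F_k$ is a linear subspace cut out by at most $k-1$ linear equations, so
\[
\dim F_k \ge n-k+1 \ge n - \lceil 9n/10\rceil + 1 \ge n/10 - 1 + 1 \ge \beta n
\]
(for small $n$ the constants absorb the exceptions). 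By \cref{lem:3.3} there is a vector $y_k \in (A\cap F_k)\setminus \bigcup_{p\in\mc D} B_p$. Note that $y_k \ne 0$, since $0 \in B_p$. Set $\theta_k = y_k/\abs{y_k}$. Then $\theta_k \in F_k$, so $T\theta_k$ is orthogonal to every earlier $T\theta_j$. After $m$ steps we obtain the pairwise orthogonality required in the statement.

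Next we check \eqref{eq:3.5}. For each $p \in \mc D$ we have $y_k \in A_p$ and $y_k \notin B_p$, which gives
\[
\varepsilon\sqrt{np}\,L_K < \margLp{K}{y_k}{p} \le C_0\sqrt{np}\,L_K .
\]
It remains to show that $\abs{y_k}$ is of order $\sqrt n$. We use $p=1 \in \mc D$, which is available since $c_0 n\ge1$, together with isotropy and \cref{lem:2.2}. Isotropy gives $L_K\abs{y_k} = \margLp{K}{y_k}{2}$. Combining this with \cref{lem:2.2},
\[
L_K\abs{y_k} = \margLp{K}{y_k}{2} \le 2C\margLp{K}{y_k}{1} \le 2C C_0\sqrt n\,L_K .
\]
In the other direction, $L_K\abs{y_k} \ge \margLp{K}{y_k}{1} > \varepsilon\sqrt n\,L_K$. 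Hence $\varepsilon\sqrt n \le \abs{y_k}\le 2CC_0\sqrt n$.

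Dividing the two-sided moment bounds by $\abs{y_k}$ and using homogeneity of $\margLp{K}{\cdot}{p}$ yields \eqref{eq:3.5} with
\[
c_1 = \frac{\varepsilon}{2CC_0}, \qquad C_1 = \frac{C_0}{\varepsilon}.
\]
These are absolute constants, because $\beta$ is fixed and hence $\varepsilon$ and $c_0$ are fixed.

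The only delicate points are bookkeeping. First, the subspaces must be defined via the $T^\top T$-orthogonality, so that \cref{lem:3.3}, which is stated for arbitrary subspaces, applies directly. Second, the dimension count must keep $\dim F_k \ge \beta n$ all the way to $k=m$. Third, the normalization of $\abs{y_k}$ has to come from a scale in $\mc D$; I use $p=1$ together with \cref{lem:2.2} precisely to avoid needing $2\in\mc D$.
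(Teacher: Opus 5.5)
Your proposal is correct and follows essentially the same route as the paper: greedy selection via \cref{lem:3.3} in the subspaces cut out by $T^\top T$-orthogonality to the earlier choices, followed by normalizing $\abs{y_k}$ to be of order $\sqrt n$ using the $p=1$ scale, isotropy, and \cref{lem:2.2}. The only differences are cosmetic. You use the bound $\dim F_k \ge n-k+1$, which already exceeds $n/10$ for every $n$, so your caveat about small $n$ is unnecessary; the paper instead computes $\dim F_j = n-j$ exactly. You also take $\beta = 1/20$ where the paper takes $\beta = 1/10$.
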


\begin{proof}
Set
\[
    m = \left\lceil \frac{9n}{10} \right\rceil.
\]
We choose the directions recursively. Starting with $F_0 = \R^n$, choose vectors $y_1,\dots,y_m$ by requiring
\[
    y_{j+1}
    \in
    (A \cap F_j) \setminus \bigcup_{p \in \mc D} (B_p \cap F_j),
    \qquad
    F_j
    =
    \set{
        y\in\R^n:
        \inner{Ty}{Ty_i}=0
        \text{ for }1\le i\le j
    },
\]
for $0 \le j < m$. By construction, \(Ty_1,\dots,Ty_j\) are nonzero and pairwise orthogonal. Hence \(y_1,\dots,y_j\) are linearly independent, and since \(T^\top T\) is invertible, so are \(T^\top T y_1,\dots,T^\top T y_j\). Therefore \(\dim F_j=n-j\), and \cref{lem:3.3} applies throughout the construction since
\[
    \dim F_j = n-j \ge \frac{n}{10}
\]
for all $0 \le j < m$.

We next control the lengths of the chosen vectors. Since $1 \in \mc D$, the case $p=1$ gives for each $1 \le i \le m$,
\[
    \varepsilon\cdot \sqrt{n}\,L_K
    \le
    \E_{X \sim K} \abs{\inner{X}{y_i}}
    \le
    C_0\cdot \sqrt{n}\,L_K.
\]
Since $K$ is isotropic, \cref{lem:2.2} with \(p=1\) and \(q=2\) gives
\[
    \E_{X \sim K}\abs{\inner{X}{y_i}}
    \le
    \margLp{K}{y_i}{2}
    =
    L_K \abs{y_i}
    \le
    C\cdot \E_{X \sim K}\abs{\inner{X}{y_i}}.
\]
Hence
\[
    c\cdot \sqrt{n} \le \abs{y_i} \le C\cdot \sqrt{n}.
\]

Set
\[
    \theta_i = \frac{y_i}{\abs{y_i}} \in \Sph,
    \qquad
    \Theta = \set{\theta_1,\dots,\theta_m}.
\]
Then \(T\theta_1,\dots,T\theta_m\) are pairwise orthogonal.
For each $1 \le i \le m$ and every $p \in \mc D$,
\[
    \frac{\varepsilon\cdot \sqrt{np}\,L_K}{C\cdot \sqrt{n}}
    \le
    \margLp{K}{\theta_i}{p}
    =
    \frac{\margLp{K}{y_i}{p}}{\abs{y_i}}
    \le
    \frac{C_0\cdot \sqrt{np}\,L_K}{c\cdot \sqrt{n}}.
\]
This is \cref{eq:3.5}.
\end{proof}

\subsection{From scale estimates to all $L^p$ norms}\label{subsec:3.3}
Our next step is to extend the bound in \cref{eq:3.5} from $p \in \mc D$ to arbitrary $q \in [1,n]$. This will be a consequence of the one-dimensional comparison inequalities from \cref{lem:2.2}.

\begin{lemma}\label{lem:3.5}
Let \(\theta\in\Sph\) satisfy \cref{eq:3.5} for every \(p\in\mc D\). Then, for every $1 \le q \le n$,
\begin{equation}\label{eq:3.6}
    c\cdot L_K
    \le
    \frac{\margLp{K}{\theta}{q}}{\sqrt{q}}
    \le
    C\cdot L_K.
\end{equation}
\end{lemma}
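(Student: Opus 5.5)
The plan is to interpolate between consecutive dyadic scales using \cref{lem:2.2}, treating separately the range $1\le q\le p_{\max}$, where $p_{\max}=\max\mc D$, and the range $p_{\max}< q\le n$. Write $f(q)=\margLp{K}{\theta}{q}/\sqrt q$. By \cref{eq:3.5}, $c_1L_K\le f(p)\le C_1L_K$ for every $p\in\mc D$.

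\emph{Case $1\le q\le p_{\max}$.} Choose $p\in\mc D$ with $p\le q<2p$; it exists because $\mc D$ consists of all powers of two up to $p_{\max}$ and $1\in\mc D$. For the upper bound, the second part of \cref{lem:2.2} with $q\le 2p$ gives $f(q)\le 2C f(p)\le 2CC_1L_K$. For the lower bound, use the next scale $2p$. If $2p\in\mc D$, apply \cref{lem:2.2} to the pair $q\le 2p$, which satisfies $2p\le 2q$:
\[
f(2p)\le 2C f(q),
\]
so $f(q)\ge c_1L_K/(2C)$. If instead $2p\notin\mc D$, then $p=p_{\max}$ and $q\ge p$. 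Monotonicity of $L^q$ norms gives
\[
\margLp{K}{\theta}{q}\ge\margLp{K}{\theta}{p}\ge c_1\sqrt p\,L_K\ge \tfrac{c_1}{\sqrt2}\sqrt q\,L_K .
\]

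\emph{Case $p_{\max}<q\le n$.} Here $p_{\max}>c_0n/2$, so $q/p_{\max}\le 2/c_0$, which is an absolute constant. The first part of \cref{lem:2.2} gives
\[
\margLp{K}{\theta}{q}\le C\tfrac{q}{p_{\max}}\margLp{K}{\theta}{p_{\max}}\le \tfrac{2C}{c_0}C_1\sqrt{p_{\max}}\,L_K\le C'\sqrt q\,L_K .
\]
For the lower bound, monotonicity in $q$ gives $\margLp{K}{\theta}{q}\ge c_1\sqrt{p_{\max}}\,L_K\ge c_1\sqrt{c_0/2}\,\sqrt q\,L_K$, using $q\le n<2p_{\max}/c_0$.

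The only delicate point is the top range $q>c_0n$. The dyadic grid stops at $c_0n$, so the constants there depend on $c_0$. This is harmless because $c_0$ is absolute. Note also that the lower bound only needs monotonicity of $L^q$ norms, while the upper bound needs the reverse comparison of \cref{lem:2.2}.
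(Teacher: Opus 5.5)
Your proof is correct and follows the paper's argument: dyadic interpolation via \cref{lem:2.2} gives the upper bound, monotonicity of $L^q$ norms gives the lower bound, and the range above the largest dyadic scale costs only constants depending on $c_0$. The detour through the scale $2p$ in your first case is unnecessary, since monotonicity from $p$ (as in your sub-case $2p\notin\mc D$) already gives $\margLp{K}{\theta}{q}\ge c_1\sqrt{q/2}\,L_K$ whenever $p\le q<2p$; this is what the paper does.
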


\begin{proof}
Let $X$ be uniformly distributed on $K$, and set
\[
    X_\theta = \inner{X}{\theta}.
\]
Since $K$ is convex, $X_\theta$ is a one-dimensional log-concave random variable.

Suppose first that $1 \le q \le c_0\cdot n$. Choose $p \in \mc D$ so that
\[
    p \le q \le 2p.
\]
By \cref{lem:2.2} and \cref{eq:3.5},
\[
    \rvLp{X_\theta}{q}
    \le
    C\cdot \frac{q}{p}\cdot \rvLp{X_\theta}{p}
    \le
   2 C\cdot \rvLp{X_\theta}{p}
    \le
    C'\cdot \sqrt{p}\,L_K
    \le
    C'\cdot \sqrt{q}\,L_K.
\]
Thus
\[
    \frac{\rvLp{X_\theta}{q}}{\sqrt{q}}
    \le
    C'\cdot L_K.
\]
Monotonicity of $L^p$ norms and \cref{eq:3.5} give
\[
    \rvLp{X_\theta}{q}
    \ge
    \rvLp{X_\theta}{p}
    \ge
    c\cdot \sqrt{p}\,L_K
    \ge
    \frac{c}{2}\cdot \sqrt{q}\,L_K.
\]
Now suppose $c_0\cdot n < q \le n$, and let $p_0$ be the largest element of $\mc D$. Then $p_0 \ge c_0\cdot n/2$, and hence
\[
    1 \le \frac{q}{p_0} \le \frac{2}{c_0}.
\]
Since $c_0$ is fixed, constants depending on $c_0$ are still absolute. Thus, by \cref{lem:2.2} and \cref{eq:3.5},
\[
    \rvLp{X_\theta}{q}
    \le
    C_{c_0}\cdot \rvLp{X_\theta}{p_0}
    \le
    C'_{c_0}\cdot \sqrt{p_0}\,L_K
    \le
     C'_{c_0}\cdot \sqrt{q}\,L_K.
\]
For the lower bound, monotonicity and \cref{eq:3.5} give
\[
    \rvLp{X_\theta}{q}
    \ge
    \rvLp{X_\theta}{p_0}
    \ge
    c\cdot \sqrt{p_0}\,L_K
    \ge
    c\cdot \sqrt{\frac{c_0\cdot q}{2}}\,L_K
    \ge
    c_{c_0}\cdot \sqrt{q}\,L_K.
\]
This proves the lemma.
\end{proof}

\subsection{\texorpdfstring{The endpoint $p>n$ and completion of the proof}{The endpoint p>n and completion of the proof}}\label{subsec:3.4}
The endpoint estimate is geometric and does not use isotropic position. It says that the support of a one-dimensional marginal is controlled by its $n$-th absolute moment, a consequence of the Brunn--Minkowski inequality. See also \cite[Lemma 3.6]{Paouris12b} for a more geometric formulation. We include the proof for completeness.

\begin{proposition}\label{prop:3.6}
Let $K \subset \R^n$ be a convex body of volume one and let $\theta \in \Sph$. Then
\[
    \sup_{1 \le p \le n} \frac{\margLp{K}{\theta}{p}}{\sqrt{p}}
    \le
    \sup_{p \ge 1} \frac{\margLp{K}{\theta}{p}}{\sqrt{p}}
    \le
    C
    \sup_{1 \le p \le n} \frac{\margLp{K}{\theta}{p}}{\sqrt{p}}.
\]
\end{proposition}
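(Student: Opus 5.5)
The first inequality is trivial, since the supremum on the left is over a subset of the indices. For the second, it suffices to treat $p > n$. The plan is to show that $\max_{x\in K}\abs{\inner{x}{\theta}}$ is controlled by a constant times $\margLp{K}{\theta}{n}$. Once that holds, every $p>n$ satisfies
\[
    \margLp{K}{\theta}{p} \le \max_{x\in K}\abs{\inner{x}{\theta}} \le C\,\margLp{K}{\theta}{n} \le C\sqrt{p}\cdot\frac{\margLp{K}{\theta}{n}}{\sqrt n},
\]
because $\sqrt{n}\le\sqrt{p}$. This gives the claim with the supremum over $1\le p\le n$.

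To prove the support bound, let $f(t) = \abs{K\cap\{\inner{x}{\theta}=t\}}_{n-1}$ be the marginal density of $\inner{X}{\theta}$. It is supported on an interval $[-a,b]$ with $a,b\ge 0$, and by Brunn--Minkowski, $f^{1/(n-1)}$ is concave on that interval. Without loss of generality $b\ge a$, so $b = \max_{x\in K}\abs{\inner{x}{\theta}}$. Let $t_0$ be a maximum point of $f$. Concavity of $f^{1/(n-1)}$ gives, for $t\in[t_0,b]$,
\[
    f(t)\ge f(t_0)\paren*{\frac{b-t}{b-t_0}}^{n-1}.
\]
We then bound $\int_{b/2}^b t^n f(t)\,dt$ from below. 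The simplest route is the cone comparison: the set $\{x\in K:\inner{x}{\theta}\ge b/2\}$ contains the convex hull of a point $x^*$ with $\inner{x^*}{\theta}=b$ and a section at a suitable level. A cleaner alternative is to write $\E\abs{\inner{X}{\theta}}^n \ge (b/2)^n\,\PP(\inner{X}{\theta}\ge b/2)$ and show that $\PP(\inner{X}{\theta}\ge b/2)\ge c^n$. Taking $n$-th roots then gives $\margLp{K}{\theta}{n}\ge c\,b/2$.

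The main obstacle is the lower bound $\PP(\inner{X}{\theta}\ge b/2)\ge c^n$. I would use the centering-free version: $K$ contains $x^*$, and after translation $K$ has volume one. The homothety $x\mapsto x^* + \tfrac12(x-x^*)$ maps $K$ into $K\cap\{\inner{x}{\theta}\ge (b+m)/2\}$, where $m=\min_K\inner{x}{\theta}\ge -b$, since $b\ge a$. So that image lies in $\{\inner{x}{\theta}\ge 0\}$, and it has volume $2^{-n}$. This is not quite enough, because we need the threshold $b/2$ rather than $0$. So we use the homothety with ratio $\lambda=1/4$ instead. Its image lies in $\{\inner{x}{\theta}\ge b - \tfrac14(b-m)\}\subset\{\inner{x}{\theta}\ge b/2\}$, because $b-m\le 2b$, and it has volume $4^{-n}$. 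This gives $\PP(\inner{X}{\theta}\ge b/2)\ge 4^{-n}$, and hence $\margLp{K}{\theta}{n}\ge b/8$. The argument uses neither isotropy nor centering, only convexity and volume one, which matches the statement.
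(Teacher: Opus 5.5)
Your proof is correct, and its skeleton matches the paper's. Both arguments reduce to the support bound $R=\max_{x\in K}\abs{\inner{x}{\theta}}\le 8\margLp{K}{\theta}{n}$, derive it from the cap estimate $\PP(\inner{X}{\theta}\ge R/2)\ge 4^{-n}$, and then bound every $p>n$ moment by $R$.

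The difference is in how you prove the cap estimate. The paper invokes Brunn's concavity principle, via Ball's theorem, for $g=f^{1/(n-1)}$. It applies the midpoint inequality $g((s+R)/2)\ge g(s)/2$ twice: first to get $\PP(\inner{X}{\theta}\ge 0)\ge 2^{-n}$, then to pass from $[0,R]$ to $[R/2,R]$. You instead use a single homothety of ratio $1/4$ centered at a maximizer $x^*$. It maps $K$ into $K\cap\set{\inner{x}{\theta}\ge R/2}$, using only convexity and $\min_K\inner{\cdot}{\theta}\ge -R$.

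Your route is more elementary. It needs no Brunn--Minkowski and no separate treatment of $n=1$, where $f^{1/(n-1)}$ is meaningless, and it gives the same constants $4^{-n}$ and $8$. In fact, the paper's midpoint inequality $f((s+R)/2)\ge 2^{-(n-1)}f(s)$ already follows from convexity alone: the section at level $(s+R)/2$ contains $\tfrac12 x^*+\tfrac12(K\cap\set{\inner{x}{\theta}=s})$. So the paper's two steps are essentially your ratio-$1/4$ homothety split into two ratio-$1/2$ halves.

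You could delete the earlier sketches you do not use: the pointwise bound from the mode $t_0$, the cone comparison, and the ratio-$1/2$ attempt. You should also drop the phrase ``after translation.'' You never translate, and translating $K$ would change the non-centered marginal whose moments the statement concerns.
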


\begin{proof}
The lower estimate is immediate. If $n=1$, then $K$ is an interval of length one, so assume $n \ge 2$. Let $X$ be uniformly distributed on $K$, and write
\[
    X_\theta = \inner{X}{\theta}.
\]
After replacing $\theta$ by $-\theta$ if necessary, we may assume
\[
    R = \sup_{x\in K}\abs{\inner{x}{\theta}} = \sup \supp X_\theta.
\]
Since $R$ is the maximum absolute value of $\inner{\cdot}{\theta}$ on $K$, the support of $X_\theta$ is an interval
\[
    [m,R]
    \qquad\text{with}\qquad
    m \ge -R.
\]
Let $f$ be the density of $X_\theta$. By Brunn--Minkowski, in the form of \cite[Theorem~1]{Ball88}, $f^{1/(n-1)}$ is concave on its support. Thus
\[
    g = f^{1/(n-1)}
\]
is concave on $[m,R]$.

The following two midpoint estimates use only the concavity of $g$. For any $s \in [m,R]$, concavity gives
\[
    g\paren*{\frac{s+R}{2}}
    \ge
    \frac{g(s)+g(R)}{2}
    \ge
    \frac{g(s)}{2}.
\]
Thus
\[
    2 \int_0^R f(t)\,dt
    \ge
    \int_m^R f\paren*{\frac{s+R}{2}}\,ds
    \ge
    2^{-(n-1)} \int_m^R f(s)\,ds
    =
    2^{-(n-1)},
\]
and so,
\[
    \PP\paren{X_\theta \ge 0} = \int_0^R f(t)\,dt \ge 2^{-n}.
\]
Applying the same midpoint argument on the interval $[0,R]$, we obtain
\[
    2 \int_{R/2}^R f(t)\,dt
    =
    \int_0^R f\paren*{\frac{s+R}{2}}\,ds
    \ge
    2^{-(n-1)} \int_0^R f(s)\,ds.
\]
We can now conclude that
\[
    \PP\paren{X_\theta \ge R/2}
    =
    \int_{R/2}^R f(t)\,dt
    \ge
    2^{-n}\PP\paren{X_\theta \ge 0}
    \ge
    4^{-n},
\]
which then implies,
\[
    \E \abs{X_\theta}^n
    \ge
    \paren*{\frac{R}{2}}^n \PP\paren{X_\theta \ge R/2}
    \ge
    \paren*{\frac{R}{8}}^n.
\]
Rearranging terms we obtain the following bound,
\[
    R \le 8 \rvLp{X_\theta}{n}.
\]

Now, for $p > n$, since $\margLp{K}{\theta}{p} \le R$,
\[
    \frac{\margLp{K}{\theta}{p}}{\sqrt{p}}
    \le
    \frac{R}{\sqrt{n}}
    \le
    8 \frac{\margLp{K}{\theta}{n}}{\sqrt{n}}.
\]
We now conclude the proof by taking the supremum over $p > n,$
\[
    \sup_{p>n} \frac{\margLp{K}{\theta}{p}}{\sqrt{p}}
    \le
    8 \frac{\margLp{K}{\theta}{n}}{\sqrt{n}}.\qedhere
\]
\end{proof}

\begin{proof}[Proof of \cref{thm:1.1}]
Let $K \subset \R^n$ be a centered convex body of volume one, and let $\Sigma_K$ be the covariance matrix of the uniform measure on $K$. Put
\[
    T = (\det \Sigma_K)^{1/(2n)} \Sigma_K^{-1/2}
\]
and set
\[
    \widetilde K = T K.
\]
Then $T$ is symmetric and volume-preserving, and $\widetilde K$ is a centered isotropic convex body of volume one.

After adjusting if needed the absolute constant in the final result we may assume $c_0\cdot n \ge 1$. Apply \cref{lem:3.4} to the isotropic body \(\widetilde K\) with this linear map \(T\). We obtain vectors \(v_1,\dots,v_m\in\Sph\), with \(m=\lceil 9n/10\rceil\), such that \(Tv_1,\dots,Tv_m\) are pairwise orthogonal and each \(v_i\) satisfies \cref{eq:3.5} with \(K\) replaced by \(\widetilde K\). Since \(\widetilde K\) is isotropic and \(v_i\in\Sph\),
\[
    \margLp{\widetilde K}{v_i}{2}=L_{\widetilde K}.
\]
Applying \cref{lem:3.5} to each \(v_i\), followed by \cref{prop:3.6}, gives, for $1 \le p \le n$, 
\[
    c\cdot \sqrt{p}\,\margLp{\widetilde K}{v_i}{2}
    \le
    \margLp{\widetilde K}{v_i}{p}
    \le
    C\cdot \sqrt{p}\,\margLp{\widetilde K}{v_i}{2}
\]
\[
    \sup_{p \ge 1}
    \frac{\margLp{\widetilde K}{v_i}{p}}{\sqrt{p}}
    \le
    C\cdot \margLp{\widetilde K}{v_i}{2},
\]
and by the absolute-moment characterization of the \(\psi_2\) norm in \cref{subsec:2.2},
\[
    \snorm{\inner{\cdot}{v_i}}_{\psi_2(\widetilde K)}
    \le
    C\cdot \margLp{\widetilde K}{v_i}{2}.
\]

We now return to the original body. For $1 \le i \le m$, set
\[
    a_i = Tv_i.
\]
The vectors $a_1,\dots,a_m$ are pairwise orthogonal by construction. Moreover, \cref{lem:2.1} gives, for every $p \ge 1$,
\[
    \frac{\margLp{K}{a_i}{p}}{\margLp{K}{a_i}{2}}
    =
    \frac{\margLp{\widetilde K}{v_i}{p}}{\margLp{\widetilde K}{v_i}{2}},
\]
and the same identity with the $\psi_2$ norm in place of the $L^p$ norm. Hence, for $1 \le p \le n$ and $1 \le i \le m$,
\[
    c\cdot \sqrt{p}\,\margLp{K}{a_i}{2}
    \le
    \margLp{K}{a_i}{p}
    \le
    C\cdot \sqrt{p}\,\margLp{K}{a_i}{2}
\]
\[
    \sup_{p \ge 1}
    \frac{\margLp{K}{a_i}{p}}{\sqrt{p}}
    \le
    C\cdot \margLp{K}{a_i}{2},
\]
and
\[
    \snorm{\inner{\cdot}{a_i}}_{\psi_2(K)}
    \le
    C\cdot \margLp{K}{a_i}{2}.
\]

Normalize the orthogonal vectors by setting
\[
    \theta_i = \frac{a_i}{\abs{a_i}},
    \qquad
    \Theta = \set{\theta_1,\dots,\theta_m}.
\]
Then $\Theta$ is an orthonormal subset of $\Sph$ with $\abs{\Theta} \ge 9n/10$. The same estimates hold for $\theta_i$ by homogeneity. This proves the absolute moment assertion for $K$, and the tail formulation follows from the absolute moment estimates as noted after \cref{thm:1.1}.

If $K$ is already isotropic, then, for $1 \le i \le m$,
\[
    \margLp{K}{\theta_i}{2} = L_K,
\]
so the preceding display becomes, for $1 \le p \le n$ and $1 \le i \le m$,
\[
    c\cdot \sqrt{p}\,L_K
    \le
    \margLp{K}{\theta_i}{p}
    \le
    C\cdot \sqrt{p}\,L_K,
\]
and, for $1 \le i \le m$,
\[
    \snorm{\inner{\cdot}{\theta_i}}_{\psi_2(K)} \le C\cdot L_K.\qedhere
\]
\end{proof}

\section{\texorpdfstring{Remarks on obtaining an orthonormal basis for \cref{thm:1.1}}{Remarks on obtaining an orthonormal basis for Theorem 1.1}} \label{sec:4}
\cref{thm:1.1} does not guarantee an orthonormal basis. A tempting way to try to obtain one is to prove an analogue of \cref{thm:1.1} for arbitrary log-concave probability measures, choose one good direction, project the measure onto the orthogonal complement of that direction, and then iterate.

That analogue is false already in dimension one. Let $E$ be an exponential random variable with mean one and set $X=E-1$. Equivalently, $X$ has density
\[
    e^{-(x+1)} \one_{\set{x \ge -1}}.
\]
This is a centered, one-dimensional log-concave random variable with variance one. Moreover, for $t<1$,
\[
    \E e^{tX}
    =
    \int_{-1}^{\infty} e^{tx} e^{-(x+1)}\,dx
    =
    \frac{e^{-t}}{1-t},
\]
while, for $t \ge 1$,
\[
    \E e^{tX} = \infty.
\]
Hence, $X$ is not subgaussian, since a subgaussian random variable has a finite moment generating function for every real number.

This obstruction is also visible geometrically and explains why a purely greedy approach is generally dangerous. For example, as in \cite{Klartag08}, take a cone, 
\[
    K = \set{(x,s)\in \R^{n-1}\times \R : 0 \le s \le h,\ x \in (1-s/h)L}
\]
over an $(n-1)$-dimensional centered convex body $L$. Many good subgaussian directions may lie in the base hyperplane $\R^{n-1}\times\set{0}$ (depending of course, on the choice of $L$). However, if $S$ denotes the "height" marginal of a uniform point in $K$ in the $s$-direction, then $S$ has density
\[
    \frac{n}{h}(1-s/h)^{n-1}
    \qquad \text{for } 0 \le s \le h.
\]
Thus $nS/h$ converges in distribution to $E$. Translating $K$ so that it is centered replaces the height marginal by $S-\E S$, and since $\frac{n}{h}\E S=n/(n+1)$, we have
\[
    \frac{n}{h}(S-\E S) \to E-1.
\]
Thus the centered height marginal converges to the shifted exponential example above. A greedy procedure may therefore select $n-1$ good subgaussian directions in the hyperplane $\R^{n-1} \times \{0\}$ and leave, as the final orthogonal direction, the axis of the cone, which need not be subgaussian with a dimension-free constant.

Nevertheless, we believe that \cref{thm:1.1} could be strengthened to an orthonormal basis specifically when $K$ is an isotropic convex body.
\begin{conjecture}
    Let $K \subset \R^n$ be an isotropic convex body. There exists an orthonormal basis $\Theta \subset S^{n-1}$ such that for all $\theta \in \Theta$ one has
    \begin{equation*}
    c\cdot \sqrt{p}\,L_K
    \le
    \paren{\E \abs{\inner{X}{\theta}}^p}^{1/p}
    \le
    C\cdot \sqrt{p}\,L_K
\end{equation*}
where the upper bound holds for all $p \ge 1$ while the lower bound only holds for $1 \le p \le n$, where $c,C > 0$ are absolute constants.
\end{conjecture}
\bibliographystyle{amsplain0}
\bibliography{main}

\end{document}